\newtheorem{theorem}{Theorem}
\newtheorem{lemma}[theorem]{Lemma}
\newtheorem{proposition}[theorem]{Proposition}
\theoremstyle{definition}
\newtheorem{example}[theorem]{Example}
\newtheorem{remark}[theorem]{Remark}
\newcommand{\ot}{\otimes}
\newcommand{\Z}{{\mathbb Z}}
\newcommand{\R}{{\mathbb R}}
\newcommand{\C}{{\mathbb C}}
\begin{document}

\title[Stochastic $R$ matrix]
{Stochastic $\boldsymbol{R}$ matrix for $\boldsymbol{U_q(A^{(1)}_n)}$}
\author{A.~Kuniba}
\email{atsuo@gokutan.c.u-tokyo.ac.jp}
\address{Institute of Physics, University of Tokyo, Komaba, Tokyo 153-8902, Japan}

\author{V.~V.~Mangazeev}
\email{vladimir.mangazeev@anu.edu.au}
\address{Department of Theoretical Physics, Research School of Physics and Engineering,
Australian National University, Canberra, ACT 0200, Australia.}

\author{S.~Maruyama}
\email{maruyama@gokutan.c.u-tokyo.ac.jp}
\address{Institute of Physics, University of Tokyo, Komaba, Tokyo 153-8902, Japan}

\author{M.~Okado}
\email{okado@sci.osaka-cu.ac.jp}
\address{Department of Mathematics, Osaka City University, 
3-3-138, Sugimoto, Sumiyoshi-ku, Osaka, 558-8585, Japan}

%\date{\today}

\maketitle

\vspace{0.3cm}
\begin{center}{\bf Abstract}\end{center}
We show that the quantum $R$ matrix for symmetric tensor 
representations of $U_q(A^{(1)}_n)$ satisfies 
the sum rule required for its stochastic interpretation 
under a suitable gauge.
Its matrix elements at a special point of the 
spectral parameter 
are found to factorize into the form that naturally 
extends Povolotsky's local transition rate in the $q$-Hahn process for $n=1$.
Based on these results 
we formulate new discrete and continuous time integrable 
Markov processes on a  
one-dimensional chain in terms of $n$ species of particles obeying 
asymmetric stochastic dynamics.
Bethe ansatz eigenvalues of the Markov matrices 
are also given.

\vspace{0.4cm}

\section{Introduction}\label{sec:int}

Quantum groups and theory of quantum integrable systems 
provide efficient algebraic and analytic tools to evaluate 
non-equilibrium characteristics in stochastic processes in statistical mechanics.
See for example \cite{GS,SW,LM,P,TW,CRM, BSc,CP,BP} and references therein.
Typically in such an approach, 
one sets up a row transfer matrix or its derivative as in usual vertex 
or spin chain models \cite{Bax}, 
and seeks the situation that admits an interpretation as 
a Markov matrix of a certain dynamical system on a one-dimensional chain.
It leads to a postulate more stringent than 
the models in the equilibrium setting.
Namely, the transfer matrix 
or its derivative must have non-negative off-diagonal elements 
and they should further satisfy 
a certain sum-to-unity or sum-to-zero conditions assuring 
the total probability conservation depending on whether 
the time evolution is discrete or continuous, respectively.

One may try to modify a given transfer matrix so as to fit them,
but doing so indiscreetly leads to a loss of the essential merit, 
the {\em integrability} or put more practically, the Bethe ansatz solvability.
In this way an important general question arises;
Can one architect the transfer matrices or their constituent quantum $R$ matrices
so as to fulfill the basic axioms of Markov matrices 
without spoiling the integrability?

The aim of this paper is to answer it affirmatively for 
the $R$ matrix associated with the symmetric tensor 
representations of the Drinfeld-Jimbo quantum affine algebra $U_q(A^{(1)}_n)$ \cite{D,J}.
By now, the quantum $R$ matrix itself is a well-known {\em classic}.
Nevertheless investigation of the above question elucidates a number of 
remarkable insights which have hitherto escaped notice.

For a quick exposition, 
let $R(z)$ be the quantum $R$ matrix on 
the symmetric tensor representation $V_l \otimes V_m$ of degrees $l$ and $m$
with spectral parameter $z$.
Then there is a suitable (stochastic) gauge $S(z)$ of $R(z)$
that satisfies the sum-to-unity condition 
$\sum_{\gamma,\delta}S(z)_{\alpha, \beta}^{\gamma,\delta} =1$
(Theorem \ref{pr:s=1})  preserving the Yang-Baxter equation
(Proposition \ref{pr:S}).
Moreover its nonzero elements at $z=q^{l-m}$ are described explicitly for $l \le m$ as
$S(z=q^{l-m})_{\alpha, \beta}^{\gamma,\delta} 
= \Phi_{q^2}(\gamma | \beta; q^{-2l}, q^{-2m})$ 
$(\alpha, \beta, \gamma,\delta \in \Z_{\ge 0}^n)$
in terms of the function $\Phi_q(\gamma| \beta; \lambda,\mu)$ defined in (\ref{Pdef}) as
\begin{align*} 
q^{\sum_{1\le i<j\le n}(\beta_i-\gamma_i)\gamma_j}
\left(\frac{\mu}{\lambda}\right)^{\gamma_1+\cdots + \gamma_n}
\frac{(\lambda;q)_{\gamma_1+\cdots + \gamma_n}
({\textstyle{\frac{\mu}{\lambda}}};q)_{\beta_1+\cdots +\beta_n-\gamma_1-\cdots -\gamma_n}}
{(\mu; q)_{\beta_1+\cdots +\beta_n}}
\prod_{i=1}^n
\frac{(q;q)_{\beta_i}}
{(q;q)_{\gamma_i}(q;q)_{\beta_i-\gamma_i}}.
\end{align*} 
See Proposition \ref{pr:sp}\footnote{
For simplicity, it is quoted omitting the distinction between 
$\beta$ and $\bar{\beta}$ etc. 
}.
For $n=1$ this function emerged essentially 
in the explicit formulas of 
the $R$ matrix and the $Q$ operators for $U_q(A^{(1)}_1)$ \cite{M1,M2}.
Around the same time it was also introduced in the form
$\varphi(m|m') = \Phi_q(m|m';{\textstyle \frac{\nu}{\mu}}, \nu)|_{n=1}$
to the realm of stochastic models by Povolotsky \cite[eq.(8)]{P} motivated by \cite{EMZ}, 
which triggered many subsequent studies, e.g.  \cite{CP,BP}.

In this paper we establish the above formula for general 
$n$ substantially in Theorem \ref{pr:Rs}.
Our strategy is to resort to the {\em characterization} of 
the $R$ matrix as the commutant of $U_q(A^{(1)}_n)$ \cite{D,J},
which takes advantage of the most essential machinery of the theory rather than 
manipulating concrete formulas as in the preceding works.
Our proof of Theorem \ref{pr:s=1} also captures the sum-to-unity relations 
(\ref{msk}) conceptually 
from the representation theory of quantum groups.
It manifests that the totality of those relations is nothing but 
the {\em $U_q(A_n)$-orbit} of the unit normalization condition (\ref{nor}) 
on the trivial highest weight vector.
Such a mechanism is quite likely to 
work similarly in many other algebras and representations.

Based on these findings on the $R$ matrices, 
we first formulate two kinds of  
commuting families of discrete time Markov processes on a one-dimensional chain.
They are described in terms of $n$ species of particles obeying 
totally asymmetric dynamics 
with and without constraint on their numbers 
occupying a site or hopping to the right at one time step.  
From the constraint-free case we then 
further extract the continuous time versions 
by differentiating the Markov transfer matrix by $\lambda$ 
in $\Phi_q(\gamma | \beta; \lambda, \mu)$ parameterizing the
commuting family.
The procedure is analogous to the standard derivation of 
spin chain Hamiltonians as in \cite[eq.(10.14.20)]{Bax}.
A curiosity encountered in our model is that the transfer matrix admits
{\em two} ``Hamiltonian points"  
$\lambda=1$ and $\lambda=\mu$ at which such calculations can naturally be executed
as in (\ref{bax}).
They lead to the two Markov matrices $H$ and $\hat{H}$ which are
interpreted as $n$-species {\em totally asymmetric zero range processes} (TAZRPs)
in which particles hop to the right and to the left adjacent site, respectively. 
By the construction the commutativity  
$[H, \hat{H}]=0$ holds, therefore the superposition
$a H + b\hat{H}$ yields an integrable asymmetric 
zero range process in which $n$ species of particles can hop to {\em either} direction.

In the TAZRP corresponding to $H$,  the local transition rate is given by
\begin{align*}
q^{\sum_{1\le i<j \le n}(\beta_i-\gamma_i)\gamma_j}
\frac{\mu^{\gamma_1+\cdots + \gamma_n-1}(q;q)_{\gamma_1+\cdots + \gamma_n-1}}
{(\mu q^{\beta_1+\cdots +\beta_n-\gamma_1-\cdots -\gamma_n};
q)_{\gamma_1+\cdots + \gamma_n}}
\prod_{i=1}^n
\frac{(q;q)_{\beta_i}}
{(q;q)_{\gamma_i}(q;q)_{\beta_i-\gamma_i}}
\end{align*}
for the nontrivial process\footnote{This is (\ref{rate}) with $\epsilon = 1$.
``Nontrivial" means $\gamma_1+ \cdots + \gamma_n \ge 1$.
In general the rate is given by 
$-\epsilon \mu^{-1} \times (\ref{yum})|_{\alpha\rightarrow \beta}$.} in which 
$\gamma_i$ among the $\beta_i$ particles of species $i$
in the departure site are moving out $(\gamma_i \le \beta_i)$. 
When $\mu=0$, the transitions are limited to the case 
$\gamma_1+\cdots + \gamma_n =1$, and the model reduces to 
the $n$-species $q$-boson process 
derived in \cite{T} whose $n=1$ case further goes back to \cite{SW}.
When $n=1$, the above transition rate for general $\mu$ reproduces the one in 
\cite[p2]{T0} by a suitable adjustment.

In the TAZRP associated to $\hat{H}$, the relevant transition rate 
(\ref{rate2}) is similar to the above.
In particular, at $\mu=0$ and $\epsilon=1$ it reduces to
\begin{align*}
q^{\sum_{1 \le i<j \le n}\gamma_i(\beta_j-\gamma_j)}
(q;q)_{\gamma_1+\cdots + \gamma_n-1}
\prod_{i=1}^n
\frac{(q;q)_{\beta_i}}
{(q;q)_{\gamma_i}(q;q)_{\beta_i-\gamma_i}}.
\end{align*}
At $q= 0$, it gives 
rise to a kinematic constraint $\sum_{1 \le i<j \le n}\gamma_i(\beta_j-\gamma_j)=0$
which is translated into a simple priority rule on the species of particles that are 
jumping out together.
It precisely reproduces the $n$-species TAZRP  
explored in \cite{KMO, KMO2} under a suitable adjustment of conventions.

Once the models are identified in the framework  
of quantum integrable systems,
spectra of the Markov matrices with the 
periodic boundary condition follow from the Bethe ansatz.
We present the eigenvalue formulas adjusted to 
the stochastic setting under consideration.  
Steady state eigenvalues, given explicitly in (\ref{esst}), are naturally identified 
with those associated with the trivial Baxter $Q$ functions.

The layout of the paper is as follows.
In Section \ref{sec:R} we derive several 
properties of the $U_q(A^{(1)}_n)$ quantum $R$ matrix $R(z)$ 
and its stochastic versions $S(z)$ and $\mathscr{S}(\lambda,\mu)$
that are essential for applications in the subsequent sections.
In Section \ref{sec:sm}  
the commuting transfer matrices built upon the 
$S(z)$ and $\mathscr{S}(\lambda,\mu)$ are shown to satisfy the 
basic axioms of Markov matrices in a certain range of parameters.
The associated stochastic processes are formulated,
which generalize various known models for $n=1$.
Section \ref{sec:ba} presents the 
Bethe ansatz eigenvalue formulas of the Markov matrices
together with some examples of steady states.
Section \ref{sec:end} is a summary.
Appendix \ref{app:R} contains explicit forms of 
simple examples of the $R$ matrix.

Throughout the paper we fix $n \in \Z_{\ge 1}$ and 
use the notation 
$[i,j]=\{k \in \Z\mid i \le k \le j\}$, 
the characteristic function 
$\theta(\mathrm{true})=1, 
\theta(\mathrm{false}) =0$, the Kronecker delta
$\delta_{\alpha, \beta}= \delta^\alpha_\beta =
\delta^{\alpha_1,\ldots, \alpha_m}_{\beta_1,\ldots, \beta_m} = 
\prod_{j=1}^m\theta(\alpha_j=\beta_j)$,
$|\alpha | = \alpha_1+\cdots  + \alpha_m$
for arrays
$\alpha=(\alpha_1,\ldots, \alpha_m), \beta=(\beta_1,\ldots, \beta_m)$ 
of {\em any} length $m$,  
$[u]=\frac{q^u-q^{-u}}{q-q^{-1}}$, 
the $q$-Pochhammer symbol
$(z; q)_m = \prod_{j=1}^m(1-zq^{j-1})$, 
the $q$-factorial $(q)_m = (q;q)_m$ and 
the $q$-binomial 
$\binom{m}{k}_{\!q} = \theta(k \in [0,m])
\frac{(q)_m}{(q)_k(q)_{m-k}}$.

\section{Quantum $R$ matrix for symmetric tensor representations of 
$U_q(A^{(1)}_n)$}\label{sec:R}

\subsection{\mathversion{bold}Quantum $R$ matrix $R(z)$}\label{ss:R}
We assume that $q$ is generic.
The Drinfeld-Jimbo quantum affine algebra (without derivation) 
$U_q(A^{(1)}_n) = U_q(\widehat{sl}_{n+1})$ \cite{D,J} is generated by 
$e_i, f_i, k^{\pm 1}_i\, (i \in \Z/(n+1)\Z)$ satisfying the relations 
\begin{equation*}
k_i k^{-1}_i = k^{-1}_ik_i = 1,\; \; [k_i, k_j]=0,\;\;
k_i e_j = D_{i,j}e_j k_i,\;\; k_i f_j = D_{i,j}^{-1}f_j k_i,\; \;
[e_i,f_j] = \delta_{i,j} \frac{k_i-k^{-1}_i}{q-q^{-1}}
\end{equation*}
and the Serre relations.
Here $D_{i,j} = q^{2\delta_{i,j}-\delta_{i,j-1}-\delta_{i,j+1}}$ with 
$\delta_{i,j} = \theta(i-j \in (n+1)\Z)$.
It is a Hopf algebra 
with the coproduct $\Delta$  given by
\begin{align}
&\Delta k^{\pm 1}_i = k^{\pm 1}_i\otimes k^{\pm 1}_i,\quad
\Delta e_i = 1\otimes e_i + e_i \otimes k_i,\quad
\Delta f_i = f_i\otimes 1 + k^{-1}_i\otimes f_i. \label{Delta}
\end{align}
For $l \in \Z_{\ge 1}$, introduce the vector space $V_l$ whose basis is labeled with 
the set $B_l$ as
\begin{align}\label{BV}
B_l = \{\alpha=(\alpha_1,\ldots, \alpha_{n+1}) \in \Z_{\ge 0}^{n+1}
\mid |\alpha| = l\},\quad
V_l  = \bigoplus_{\alpha=(\alpha_1,\ldots, \alpha_{n+1})  \in B_l}\C
|\alpha_1,\ldots, \alpha_{n+1}\rangle.
\end{align}
We write $|\alpha_1,\ldots, \alpha_{n+1}\rangle$ simply as $|\alpha \rangle$.
The degree-$l$ symmetric tensor representation 
with {\em spectral parameter} $x$ 
$\pi_x^l: U_q(A^{(1)}_n) \rightarrow \mathrm{End}(V_l)$ 
is a finite dimensional irreducible representation given by
\begin{equation}\label{actsA}
\pi_x^l(e_i)|\alpha\rangle
= x^{\delta_{i,0}}[\alpha_i]|\alpha-\hat{i}\,\rangle,\quad
\pi_x^l(f_i)|\alpha\rangle
= x^{-\delta_{i,0}}[\alpha_{i+1}]|\alpha+\hat{i}\,\rangle,\quad
\pi_x^l(k_i)|\alpha\rangle
= q^{\alpha_{i+1}-\alpha_i}|\alpha\rangle,
\end{equation}
where $\hat{i}=(0,\ldots,0, 1,-1,0,\ldots,0)\in \Z^{n+1}$ 
contains $1,-1$ at the $i$-th and the $(i\!+\!1)$-th positions from the left 
and all the indices are to be understood mod $n+1$ as usual.
In (\ref{actsA}), vectors $|\alpha \pm \hat{i}\rangle$ 
such that $\alpha \pm \hat{i}\not\in B_l$
are to be understood as zero.

\begin{remark}\label{re:anti}
Let $U_q(A_n)$ be the subalgebra generated by $e_i, f_i, k^{\pm 1}_i$ with $i \neq 0$.
As a $U_q(A_n)$-module, the highest weight vector in $V_l$ is 
$|0,\ldots, 0, l\rangle$, which is also annihilated by all the 
$f_i$'s  except $f_n$. Thus $V_l$ is actually the 
$l$-fold symmetric tensor of the {\em anti-vector} representation
which corresponds to the $n \times l$ rectangular Young diagram.
\end{remark}

For generic $x$ and $y$, 
the tensor product representations 
$\pi_{x,y}^{l,m}:=(\pi_x^l\otimes \pi_y^m)\circ \Delta$ on $V_l\otimes V_m$  
is irreducible and isomorphic to $\pi_{y,x}^{m,l}$.
From this fact and (\ref{actsA}), it follows that there is a unique intertwiner
$\check{R}(z)=\check{R}^{l,m}(z): V_l \otimes V_m \rightarrow V_m \otimes V_l$ 
depending on $z=x/y$ satisfying 
\begin{align}\label{ir}
\check{R}(z) \pi_{x,y}^{l,m}(g) = 
\pi_{y,x}^{m,l}(g) \check{R}(z),\qquad \forall g \in U_q(A^{(1)}_n)
\end{align}
up to an overall normalization.
We fix it by
\begin{align}\label{nor}
\check{R}(z)(|0,\ldots, 0,l\rangle\otimes |0,\ldots, 0,m\rangle)
= |0,\ldots, 0,m\rangle\otimes |0,\ldots, 0,l\rangle.
\end{align}

Let us further introduce $R(z) = R^{l,m}(z)=P \check{R}^{l,m}(z) \in 
\mathrm{End}(V_l \otimes V_m)$, where 
$P(|\alpha\rangle \otimes |\beta\rangle) = 
|\beta\rangle \otimes |\alpha\rangle$ is the transposition.
The both $R(z)$ and $\check{R}(z)$ will be called the
{\em quantum $R$ matrix} or just $R$ matrix for short.
Its action is expressed as 
\begin{align}\label{RR}
R(z)(|\alpha\rangle \otimes | \beta\rangle ) = 
\sum_{\gamma,\delta}R(z)_{\alpha,\beta}^{\gamma,\delta}
|\gamma\rangle \otimes | \delta\rangle,\quad
\check{R}(z)(|\alpha\rangle \otimes | \beta\rangle ) = 
\sum_{\gamma,\delta}R(z)_{\alpha,\beta}^{\gamma,\delta}
|\delta\rangle \otimes | \gamma\rangle,
\end{align}
where $\alpha \in B_l, \beta\in B_m$ and the 
sums are taken over 
$\gamma \in B_l, \delta \in B_m$.
The matrix elements $R(z)_{\alpha,\beta}^{\gamma,\delta}$ 
are rational functions in $z$ and $q$.
In principle, they are computable either by the {\em fusion}  \cite{KRS} 
from the $(l,m)=(1,1)$ case (bottom-up)
or by taking the image of the {\em universal $R$} (top-down).
Practically an efficient alternative is to evaluate the 
trace of the product of the {\em three-dimensional $R$ operators} 
\cite{KV, BS, BMS, KO1} satisfying the {\em tetrahedron equation}.
This approach has been developed in \cite{BS, M1, M2, KS, KO2, KOS}
as an outgrowth of the pioneering works \cite{Zam80, BB, SMS}.
Examples in 
Appendix \ref{app:R} have been generated by this method by using 
\cite[eq.(2.24)]{KOS}$|_{\epsilon_1=\cdots = \epsilon_n = 0}$.
See also \cite{KMO2} for a recent application of 
the tetrahedron equation to a multispecies TAZRP.

We depict the matrix element of the $R$ matrix as
\begin{equation}\label{vertex}
\begin{picture}(200,45)(-90,-21)
\put(-60,-2){$R(z)_{\alpha, \beta}^{\gamma,\delta}=$}
\put(0,0){\vector(1,0){24}}
\put(12,-12){\vector(0,1){24}}
\put(-10,-2){$\alpha$}\put(27,-2){$\gamma$}
\put(9,-22){$\beta$}\put(9,16){$\delta$}
\end{picture}
\end{equation}
suppressing dependence on $n,z,q$, and also $l,m$ associated with 
the horizontal and vertical lines, respectively.
This picture matches the action of $\check{R}(z)$ in (\ref{RR})
viewed in the $\nearrow$ direction. 
The relation (\ref{ir}) with $g=k_i$ tells 
the {\em weight conservation} property that
$R(z)_{\alpha, \beta}^{\gamma,\delta}=0$ unless 
$\alpha+\beta=\gamma+\delta \in \Z_{\ge 0}^{n+1}$.

The most significant property of the $R$ matrix is the 
Yang-Baxter equation \cite{Bax} which is presented in two equivalent forms:
\begin{align}
&(\check{R}^{l,m}(x)\otimes 1)
(1 \otimes \check{R}^{k,m}(xy))(\check{R}^{k,l}(y)\otimes 1)
= (1 \otimes \check{R}^{k,l}(y))(\check{R}^{k,m}(xy)\otimes 1)
(1 \otimes \check{R}^{l,m}(x)),\label{yb1}\\
&R_{2,3}^{l,m}(y)R_{1,3}^{k,m}(xy)R_{1,2}^{k,l}(x)
= R_{1,2}^{k,l}(x) R_{1,3}^{k,m}(xy)R_{2,3}^{l,m}(y),\label{yb2}
\end{align}
where the lower 
indices in (\ref{yb2}) specify the components 
on which $R(z)$ acts nontrivially\footnote{
Although subtle, 
we distinguish the degrees $l,m$ of symmetric tensors, 
components $i,j$ in tensor products in $R^{l,m}_{i,j}(z)$ from the 
indices $\alpha, \beta, \gamma, \delta$ specifying the element
$R(z)^{\gamma, \delta}_{\alpha, \beta}$ by 
putting them on the opposite side of the spectral parameter $(z)$.
The similar convention will be used also for 
$S(z)$ and $\mathscr{S}(\lambda, \mu)$
introduced later.}.
The relations (\ref{yb1}) and (\ref{yb2})  hold as the operators 
$V_k \otimes V_l \otimes V_m \rightarrow V_m \otimes V_l \otimes V_k$ and 
$V_k \otimes V_l \otimes V_m \rightarrow V_k \otimes V_l \otimes V_m$, respectively.
The equality of the matrix element for 
$|\alpha\rangle \otimes |\beta\rangle \otimes |\gamma\rangle
\mapsto 
|\alpha''\rangle \otimes |\beta''\rangle \otimes |\gamma''\rangle$
in (\ref{yb2}) is depicted as
\begin{equation}\label{ybe}
\begin{picture}(150,105)(60,0)

\put(0,0){
\put(-50,50){${\displaystyle \sum_{\alpha',\beta',\gamma'}}$}
\put(0,58){\vector(3,-1){80}}
\put(-13,58){$\alpha$}\put(35,32){$\alpha'$}\put(85,27){$\alpha''$}
\put(0,42){\vector(3,1){80}}
\put(-13,37){$\beta$}\put(37,59){$\beta'$}\put(85,67){$\beta''$}
\put(60,15){\vector(0,1){70}}
\put(58,5){$\gamma$}\put(63,48){$\gamma'$}\put(58,91){$\gamma''$}
\put(120,50){$=$}}

\put(190,0){
\put(-45,50){${\displaystyle \sum_{\alpha',\beta',\gamma'}}$}
\put(0,70){\vector(3,-1){80}}
\put(0,15){\put(-10,55){$\alpha$}\put(35,47){$\alpha'$}\put(85,25){$\alpha''$}}
\put(0,30){\vector(3,1){80}}
\put(0,-12){\put(-11,37){$\beta$}\put(37,46){$\beta'$}\put(85,67){$\beta''$}}
\put(20,15){\vector(0,1){70}}
\put(-40,0){\put(58,5){$\gamma$}\put(49,48){$\gamma'$}\put(58,91){$\gamma''$}}}

\end{picture}
\end{equation}
The $R$ matrix also satisfies
\begin{align}
&\check{R}^{l,m}(z)\check{R}^{m,l}(z^{-1}) = \mathrm{id}_{V_m \otimes V_l},
\label{inv}\\
&R(z)^{\gamma, \delta}_{\alpha, \beta} = 
R(z)_{\gamma', \delta'}^{\alpha', \beta'}
\prod_{i=1}^{n+1}\frac{(q^2)_{\alpha_i}(q^2)_{\beta_i}}
{(q^2)_{\gamma_i}(q^2)_{\delta_i}}. 
\label{rev}
\end{align}
The former is called the inversion relation. 
In the latter $\alpha'=(\alpha_{n+1},\ldots, \alpha_1)$ denotes the 
reverse array of $\alpha=(\alpha_1,\ldots, \alpha_{n+1})$ and 
$\beta', \gamma', \delta'$ are similarly defined. 
It is a corollary of  \cite[eqs. (2.4), (2.24)]{KOS}.

\begin{theorem}\label{pr:Rs}
For $l\le m$, 
elements of the $R$ matrix $R(z)=R^{l,m}(z)$ 
admit the explicit formula at $z=q^{l-m}$:
\begin{align}
R(q^{l-m})_{\alpha, \beta}^{\gamma,\delta}
&= \delta_{\alpha+\beta}^{\gamma+\delta}\,
q^\psi
\binom{m}{l}_{\! q^2}^{\!-1}\,
\prod_{i=1}^{n+1}\binom{\beta_i}{\gamma_i}_{\! q^2},
\label{rs}\\
\psi&= \psi_{\alpha, \beta}^{\gamma,\delta}
= \sum_{1 \le i<j \le n+1}\alpha_i(\beta_j-\gamma_j)+\sum_{1 \le i<j \le n+1}(\beta_i-\gamma_i)\gamma_j.\label{psi}
\end{align}
\end{theorem}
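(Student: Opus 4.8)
Rather than computing $R(z)$ by fusion or via three–dimensional $R$ operators, the plan is to pin $R(q^{l-m})$ down through the \emph{commutant characterization} (\ref{ir})--(\ref{nor}) of the $R$ matrix. Let $X\in\mathrm{End}(V_l\otimes V_m)$ be the operator whose matrix elements $X^{\gamma,\delta}_{\alpha,\beta}$ are declared equal to the right–hand side of (\ref{rs})--(\ref{psi}), and fix $x,y$ with $x/y=q^{l-m}$. Assuming $\pi_{x,y}^{l,m}$ is still irreducible at this value of $z$ — as for $n=1$, the point $z=q^{l-m}$ is not one of the special values at which the tensor product of two symmetric tensors degenerates, equivalently $R(q^{l-m})$ stays invertible — the intertwiner is unique up to scalar, so it suffices to prove: (i) $X\neq0$ and
\[
X\,(\pi_x^l\otimes\pi_y^m)(\Delta g)=(\pi_x^l\otimes\pi_y^m)(\Delta^{\mathrm{op}}g)\,X
\qquad\text{for }g=e_i,f_i,k_i^{\pm1}\ (i\in\Z/(n+1)\Z),
\]
$\Delta^{\mathrm{op}}$ being the opposite coproduct (this is (\ref{ir}) rewritten for $R(z)=P\check R(z)$); and (ii) $X(|0,\dots,0,l\rangle\otimes|0,\dots,0,m\rangle)=|0,\dots,0,l\rangle\otimes|0,\dots,0,m\rangle$, the normalization equivalent to (\ref{nor}) for $R=P\check R$. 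Then $X=R(q^{l-m})$, which is the claim.

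Part (ii) is immediate: acting on $|0,\dots,0,l\rangle\otimes|0,\dots,0,m\rangle$, the factors $\binom{\beta_i}{\gamma_i}_{\!q^2}$ force $\gamma_i=0$ for $i\le n$, hence $\gamma=(0,\dots,0,l)$ by $|\gamma|=l$ and the hypothesis $l\le m$; then $\delta=(0,\dots,0,m)$, $\psi=0$, and the coefficient is $\binom{m}{l}_{\!q^2}^{\!-1}\binom{m}{l}_{\!q^2}=1$, all other outputs vanishing — which also shows $X\neq0$. For (i), the relations with $g=k_i^{\pm1}$ hold at once from the Kronecker delta $\delta^{\gamma+\delta}_{\alpha+\beta}$ (weight conservation), and those with $g=f_i$ are checked by the same method (or reduced to them via the reversal symmetry (\ref{rev})), so only $g=e_i$ remains. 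For $i\in[1,n]$ one inserts (\ref{actsA}) and $\Delta e_i=1\otimes e_i+e_i\otimes k_i$, $\Delta^{\mathrm{op}}e_i=e_i\otimes1+k_i\otimes e_i$ (cf. (\ref{Delta})) and equates the coefficient of $|\gamma\rangle\otimes|\delta\rangle$ on both sides: no spectral parameter enters since $\delta_{i,0}=0$, and one is left with a linear identity relating the neighbouring values $X^{\gamma,\delta}_{\alpha,\beta-\hat i}$, $X^{\gamma,\delta}_{\alpha-\hat i,\beta}$, $X^{\gamma+\hat i,\delta}_{\alpha,\beta}$, $X^{\gamma,\delta+\hat i}_{\alpha,\beta}$, which reduces to the $q$–Pascal rule $\binom{N}{K}_{\!q^2}=\binom{N-1}{K-1}_{\!q^2}+q^{2K}\binom{N-1}{K}_{\!q^2}$ once one records how $\psi$ changes under these shifts. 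This already exhibits $X$ as a $U_q(A_n)$–intertwiner, with no reference to $z$.

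The crux is the affine node $i=0$, where $\pi_x^l(e_0)$ and $\pi_y^m(e_0)$ carry the factors $x$ and $y$ and $\hat 0=(-1,0,\dots,0,1)$ couples the first and last coordinates. The same comparison now yields a $q$–binomial identity whose two sides differ by a power of $q$ produced by the ``wrap–around'' of the range $1\le i<j\le n+1$ occurring in (\ref{psi}); this power is matched exactly by $x/y=q^{l-m}$, so the relation closes \emph{precisely} at the asserted value of the spectral parameter — and this is how $z=q^{l-m}$ gets forced. I expect this $i=0$ verification — the one place where the distinguished value of $z$ enters and where the $q$–binomial bookkeeping is heaviest — to be the main obstacle; a secondary point needing care is the justification that the intertwiner space really is one–dimensional at $z=q^{l-m}$, i.e.\ that this value avoids the reducibility locus of $\pi_{x,y}^{l,m}$. (If one prefers, the identical analysis can be run as \emph{solving} the intertwining recursions from the normalization rather than \emph{verifying} the closed form.)
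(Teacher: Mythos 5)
Your plan is essentially the paper's own proof: the paper also takes the right-hand side of (\ref{rs}) as an ansatz and verifies the intertwining relation (\ref{ir}) together with the normalization (\ref{nor}), reducing each Chevalley generator to an elementary $q$-binomial identity with the help of a small lemma (Lemma \ref{le:psi}) recording how $\psi$ changes under the index shifts, the value $z=q^{l-m}$ entering only through the $(l-m)\delta_{i,0}$ terms at the affine node. The only differences are cosmetic: the paper checks $g=f_i$ uniformly for all $i\in\Z/(n+1)\Z$ instead of isolating $i=0$, and it settles the specialization/uniqueness point you flag by invoking the known non-singularity of $R(z)$ at $z=q^{l-m}$ \cite{KOS}.
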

Note that the $q$-binomial factors in (\ref{rs}) tell that 
$R(q^{l-m})_{\alpha, \beta}^{\gamma,\delta}=0$ unless 
$\beta\ge \gamma$ or equivalently 
$\alpha \le \delta$ under the condition 
$\alpha+\beta= \gamma+\delta$.
Here and in what follows, 
$u\ge v$ for $u,v \in \Z^k$ for any $k$ is defined by 
$u-v \in \Z_{\ge 0}^k$ and $\le$ is defined similarly.
The condition $l\le m$ in the claim matches this property.
It is interesting that 
the  ``inter-color coupling" enters only via $\psi$ apparently.
See the end of Appendix \ref{app:R}  for an example.
For the proof we prepare 
\begin{lemma}\label{le:psi}
For any $i \in \Z/(n+1)\Z$, the following equalities are valid:
\begin{align*}
\psi_{\alpha, \beta}^{\gamma-\hat{i},\delta} - 
\psi_{\alpha, \beta}^{\gamma,\delta-\hat{i}}
&= \gamma_{i+1}-\alpha_i+\beta_i-\gamma_i+1+(l-m)\delta_{i,0},\\
\psi_{\alpha+\hat{i}, \beta}^{\gamma,\delta}-
\psi_{\alpha, \beta}^{\gamma,\delta-\hat{i}}
&= \beta_{i+1}-\gamma_{i+1}+(l-m)\delta_{i,0},\\
\psi_{\alpha, \beta+\hat{i}}^{\gamma,\delta}-
\psi_{\alpha, \beta}^{\gamma,\delta-\hat{i}}
&= \gamma_{i+1}-\alpha_i.
\end{align*}
\end{lemma}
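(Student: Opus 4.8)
The plan is to reduce the whole statement to the bilinearity of the ``upper‑triangular'' pairing $B(u,v)=\sum_{1\le p<q\le n+1}u_pv_q$ on $\Z^{n+1}$. The first point is that $\psi_{\alpha,\beta}^{\gamma,\delta}$ does not actually involve $\delta$, so $\psi_{\alpha,\beta}^{\gamma,\delta-\hat{i}}=\psi_{\alpha,\beta}^{\gamma,\delta}$ and the left‑hand sides of the three asserted identities may as well be read with $\delta$ in place of $\delta-\hat{i}$; moreover, writing $\eta:=\beta-\gamma$, one has $\psi_{\alpha,\beta}^{\gamma,\delta}=B(\alpha,\eta)+B(\eta,\gamma)$. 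Now observe how the three shifts act on these data: $\gamma\mapsto\gamma-\hat{i}$ sends the pair $(\eta,\gamma)$ to $(\eta+\hat{i},\gamma-\hat{i})$; $\alpha\mapsto\alpha+\hat{i}$ touches only the first slot of $B(\alpha,\eta)$; and $\beta\mapsto\beta+\hat{i}$ sends $\eta\mapsto\eta+\hat{i}$ with $\gamma$ unchanged. By bilinearity each left‑hand side therefore collapses to a short combination of the elementary values $B(u,\hat{i})$, $B(\hat{i},v)$ and $B(\hat{i},\hat{i})$.

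The second step is to compute those elementary values. For $i\in[1,n]$ one reads off directly from $\hat{i}=(\dots,0,1,-1,0,\dots)$ that $B(u,\hat{i})=-u_i$, $B(\hat{i},v)=v_{i+1}$ and $B(\hat{i},\hat{i})=-1$; for $i=0$, where $\hat{0}$ carries $+1$ in position $n+1$ and $-1$ in position $1$, the same count gives $B(u,\hat{0})=|u|-u_{n+1}$, $B(\hat{0},v)=v_1-|v|$ and $B(\hat{0},\hat{0})=-1$. These combine into the uniform formulas $B(u,\hat{i})=-u_i+|u|\,\delta_{i,0}$ and $B(\hat{i},v)=v_{i+1}-|v|\,\delta_{i,0}$ (all indices mod $n+1$, and $\delta_{i,0}=\theta(i\in(n+1)\Z)$ as in Section~\ref{sec:R}). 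The extra terms $|u|\,\delta_{i,0}$, $|v|\,\delta_{i,0}$ are the only source of the $(l-m)\delta_{i,0}$ corrections in the Lemma, once one inserts $|\alpha|=|\gamma|=l$ and $|\beta|=|\delta|=m$, hence $|\eta|=m-l$.

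The final step is the substitution. For the first identity,
\[
\psi_{\alpha,\beta}^{\gamma-\hat{i},\delta}-\psi_{\alpha,\beta}^{\gamma,\delta}
=B(\alpha,\hat{i})-B(\eta,\hat{i})+B(\hat{i},\gamma)-B(\hat{i},\hat{i})
=-\alpha_i+\eta_i+\gamma_{i+1}+1+\bigl(l-(m-l)-l\bigr)\delta_{i,0},
\]
which is exactly $\gamma_{i+1}-\alpha_i+\beta_i-\gamma_i+1+(l-m)\delta_{i,0}$ because $\eta_i=\beta_i-\gamma_i$. The other two are shorter: $\psi_{\alpha+\hat{i},\beta}^{\gamma,\delta}-\psi_{\alpha,\beta}^{\gamma,\delta}=B(\hat{i},\eta)=\eta_{i+1}-|\eta|\,\delta_{i,0}=\beta_{i+1}-\gamma_{i+1}+(l-m)\delta_{i,0}$, and $\psi_{\alpha,\beta+\hat{i}}^{\gamma,\delta}-\psi_{\alpha,\beta}^{\gamma,\delta}=B(\alpha,\hat{i})+B(\hat{i},\gamma)=\gamma_{i+1}-\alpha_i$, where the $+|\alpha|\,\delta_{i,0}$ and $-|\gamma|\,\delta_{i,0}$ cancel since $|\alpha|=|\gamma|=l$. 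There is no conceptual difficulty here; it is a bilinear bookkeeping exercise, and the one place that genuinely needs care is the asymmetric role of $i=0$, where $\hat{0}$ ``wraps around'' the linear order $1<2<\cdots<n+1$ underlying $B$. That wrap‑around is exactly where the total‑weight constraints come in and manufacture the $(l-m)\delta_{i,0}$ terms, so one must resist treating $B$ as if it were translation invariant in the index.
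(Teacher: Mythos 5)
Your proposal is correct: the bilinear bookkeeping via $B(u,v)=\sum_{p<q}u_pv_q$, together with the weight normalizations $|\alpha|=|\gamma|=l$, $|\beta|=m$ that produce the $(l-m)\delta_{i,0}$ terms, reproduces all three identities, and the paper itself proves the lemma by exactly such a direct calculation (which it does not spell out). No discrepancy to report.
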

\begin{proof}
A direct calculation.
\end{proof}

{\em Proof of Theorem \ref{pr:Rs}}.
$R(z)$ is not singular at $z=q^{l-m}$.
See for example \cite[eq.(6.16)]{KOS}.
Thus it suffices to check that the RHS of (\ref{rs}) satisfies (\ref{ir}) and (\ref{nor}).
The latter is obvious.
The relation (\ref{ir}) with $g=k_i$ 
means the weight conservation and it holds due to the factor 
$\delta_{\alpha+\beta}^{\gamma+\delta}$.
In the sequel we show (\ref{ir}) for $g=f_i$. 
The case $g=e_i$ can be verified similarly.
Let the both sides of (\ref{ir}) 
act on $|\alpha\rangle \otimes | \beta \rangle \in V_l \otimes V_m$
and compare the coefficients of $|\delta\rangle \otimes |\gamma\rangle$
in the output vector.
Using (\ref{Delta}), (\ref{actsA}) and (\ref{RR}) 
we find that the relation to be proved is 
\begin{equation*}
\begin{split}
&R(z)_{\alpha,\beta}^{\gamma,\delta-\hat{i}}[\delta_{i+1}+1]\theta(\delta_i \ge 1)
+R(z)_{\alpha,\beta}^{\gamma-\hat{i},\delta}
q^{\delta_i-\delta_{i+1}}z^{-\delta_{i,0}}
[\gamma_{i+1}+1]\theta(\gamma_i \ge 1)\\
&= R(z)_{\alpha+\hat{i},\beta}^{\gamma,\delta}[\alpha_{i+1}]z^{-\delta_{i,0}}
+ R(z)_{\alpha,\beta+\hat{i}}^{\gamma,\delta}[\beta_{i+1}]q^{\alpha_i-\alpha_{i+1}}
\end{split}
\end{equation*}
at $z=q^{l-m}$ 
under the weight conservation condition (i)
$\alpha_i+\beta_i=\gamma_i+\delta_i-1$ and (ii)
$\alpha_{i+1}+\beta_{i+1}= \gamma_{i+1}+\delta_{i+1}+1$.
By substituting (\ref{rs}) and applying Lemma \ref{le:psi}, this is simplified to
\begin{equation*}
\begin{split}
&[\delta_{i+1}+1](1-q^{2\beta_{i+1}})
(1-q^{2(\beta_i-\gamma_i+1)})(1-q^{2(\gamma_{i+1}+1)})
\theta(\delta_i \ge 1)\\
&+q^{\gamma_{i+1}-\delta_{i+1}+2\beta_i-2\gamma_i+2}
[\gamma_{i+1}+1](1-q^{2\beta_{i+1}})
(1-q^{2(\beta_{i+1}-\gamma_{i+1})})(1-q^{2\gamma_i})
\theta(\gamma_{i}\ge 1)\\
&= q^{\beta_{i+1}-\gamma_{i+1}}[\alpha_{i+1}]
(1-q^{2\beta_{i+1}})(1-q^{2(\beta_i-\gamma_i+1)})(1-q^{2(\gamma_{i+1}+1)})\\
&+ q^{\gamma_{i+1}-\alpha_{i+1}}[\beta_{i+1}]
(1-q^{2(\beta_i+1)})(1-q^{2(\beta_{i+1}-\gamma_{i+1})})
(1-q^{2(\gamma_{i+1}+1)}).
\end{split}
\end{equation*} 
We may drop $\theta(\delta_i\ge 1)$ because if $\delta_i=0$, 
the weight condition (i) $\alpha_i+\beta_i-\gamma_i+1=0$ enforces 
$1-q^{\beta_i-\gamma_i+1}=0$. 
Similarly $\theta(\gamma_i\ge 1)$ can also be discarded.
Then we are left to show
\begin{align*}
&(1-q^{2(\delta_{i+1}+1)})(1-q^{2(\beta_i-\gamma_i+1)})
+q^{2+2\beta_i-2\gamma_i}(1-q^{2\gamma_i})(1-q^{2(\beta_{i+1}-\gamma_{i+1})})\\
&= q^{2(\beta_{i+1}-\gamma_{i+1})}(1-q^{2\alpha_{i+1}})
(1-q^{2(\beta_{i}-\gamma_{i}+1)})
+(1-q^{2(\beta_i+1)})(1-q^{2(\beta_{i+1}-\gamma_{i+1})}).
\end{align*}
This is easily checked by using the weight condition (ii).
\qed

\subsection{\mathversion{bold}Stochastic $R$ matrix $S(z)$} 
We introduce a slight but essential modification 
$S(z) = S^{l,m}(z) \in \mathrm{End}(V_l\otimes V_m)$ of the $R$ matrix by
\begin{align}
 &S(z) (|\alpha\rangle \otimes | \beta\rangle ) = 
\sum_{\gamma,\delta}S(z)_{\alpha,\beta}^{\gamma,\delta}
|\gamma\rangle \otimes | \delta\rangle,\quad
S(z)^{\gamma,\delta}_{\alpha, \beta} 
= q^\eta R(z)^{\gamma,\delta}_{\alpha, \beta},
\label{SR}\\
&\eta=\eta_{\alpha,\beta}^{\gamma,\delta} 
= \sum_{1 \le i<j \le n+1}(\beta_i - \gamma_i)\gamma_j - 
\sum_{1 \le i<j \le n+1}
\alpha_i(\beta_j-\gamma_j)
=
\sum_{1 \le i<j \le n+1}
(\delta_i\gamma_j - \alpha_i\beta_j),\label{eta}
\end{align}
where the sum $\sum_{\gamma,\delta}$ is taken over 
$\gamma \in B_l, \delta \in B_m$ as in (\ref{RR}).
The last equality in (\ref{eta}) is derived by using 
$\alpha_i+\beta_i = \gamma_i + \delta_i$. 
We also introduce $\check{S}(z) = P S(z)$.
The both $S(z)$ and $\check{S}(z)$ 
will be called the {\em stochastic $R$ matrix} or 
just $S$ matrix for short.

\begin{proposition}\label{pr:S}
The $S$ matrix satisfies the inversion relation
$\check{S}^{l,m}(z)\check{S}^{m,l}(z^{-1}) = \mathrm{id}_{V_m \otimes V_l}$
and the Yang-Baxter equation 
$S_{2,3}^{l,m}(y)S_{1,3}^{k,m}(xy)S_{1,2}^{k,l}(x)
= S_{1,2}^{k,l}(x) S_{1,3}^{k,m}(xy)S_{2,3}^{l,m}(y)$.
\end{proposition}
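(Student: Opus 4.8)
The plan is to reduce the claimed properties of the $S$ matrix to the already-established properties of $R(z)$ in \eqref{inv} and \eqref{yb2}, by tracking how the gauge factor $q^\eta$ behaves under composition. The key observation is that $\eta$ has an \emph{additive/telescoping} structure: writing $\Phi(\alpha,\beta) = \sum_{1\le i<j\le n+1}\alpha_i\beta_j$, the exponent in \eqref{eta} is $\eta^{\gamma,\delta}_{\alpha,\beta} = \Phi(\delta,\gamma) - \Phi(\alpha,\beta)$. So the gauge transformation $S(z)^{\gamma,\delta}_{\alpha,\beta} = q^{\Phi(\delta,\gamma)-\Phi(\alpha,\beta)} R(z)^{\gamma,\delta}_{\alpha,\beta}$ is of the form $S = (G^{-1}\otimes \text{stuff})\,R\,(\cdots)$; more precisely I want to exhibit operators that implement it. The cleanest route: introduce on each $V_l$ the diagonal operator $D_l$ acting by $D_l|\alpha\rangle = q^{c(\alpha)}|\alpha\rangle$ for a suitable quadratic form $c$, and a ``crossing'' diagonal operator on $V_l\otimes V_m$ implementing the cross term $\Phi$; then recognize $S(z)$ as a conjugate/twist of $R(z)$ by such operators. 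Because $R(z)$ preserves weight ($\alpha+\beta=\gamma+\delta$), many candidate quadratic corrections are available, and one of them will turn $q^\eta$ into a genuine conjugation. This is the step where I expect the real bookkeeping to live.

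Concretely, I would proceed as follows. First, for the \textbf{inversion relation}: write out $\bigl(\check S^{l,m}(z)\check S^{m,l}(z^{-1})\bigr)^{\beta,\alpha}_{\beta',\alpha'}$ as a sum over intermediate states $\gamma\in B_l,\delta\in B_m$ of $S(z)^{\gamma,\delta}_{\alpha,\beta}\,S(z^{-1})^{\alpha',\beta'}_{\delta,\gamma}$ — note the order of tensor factors flips under $P$ — and observe that the two gauge exponents combine as $\bigl(\Phi(\delta,\gamma)-\Phi(\alpha,\beta)\bigr) + \bigl(\Phi(\beta',\alpha')-\Phi(\delta,\gamma)\bigr) = \Phi(\beta',\alpha')-\Phi(\alpha,\beta)$, which is \emph{independent of the summation index} $(\gamma,\delta)$. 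Hence the sum equals $q^{\Phi(\beta',\alpha')-\Phi(\alpha,\beta)}$ times the corresponding matrix element of $\check R^{l,m}(z)\check R^{m,l}(z^{-1}) = \mathrm{id}$, which is $\delta^{\beta,\alpha}_{\beta',\alpha'}$; on the diagonal the prefactor is $q^0=1$, so $\check S^{l,m}(z)\check S^{m,l}(z^{-1})=\mathrm{id}$.

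Next, for the \textbf{Yang--Baxter equation} \eqref{yb2} for $S$: I expand both sides on basis vectors $|\alpha\rangle\otimes|\beta\rangle\otimes|\gamma\rangle \mapsto |\alpha''\rangle\otimes|\beta''\rangle\otimes|\gamma''\rangle$ as in \eqref{ybe}, summing over the intermediate triple $(\alpha',\beta',\gamma')$. On the left side $S^{l,m}_{2,3}(y)S^{k,m}_{1,3}(xy)S^{k,l}_{1,2}(x)$, the three gauge factors from \eqref{SR}–\eqref{eta} multiply; I claim their product telescopes to $q^{\Phi(\alpha'',\beta'')+\Phi(\alpha'',\gamma'')+\Phi(\beta'',\gamma'') - \Phi(\alpha,\beta)-\Phi(\alpha,\gamma)-\Phi(\beta,\gamma)}$, i.e. it depends only on the external indices and not on $(\alpha',\beta',\gamma')$. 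The same computation on the right side $S^{k,l}_{1,2}(x)S^{k,m}_{1,3}(xy)S^{l,m}_{2,3}(y)$ yields the \emph{same} external prefactor — this is where the precise form of $\eta$, in particular the identity $\eta = \sum_{i<j}(\delta_i\gamma_j - \alpha_i\beta_j)$ used together with weight conservation at each vertex, must be invoked so that the two orderings of the three two-body interactions produce identical cross terms. Granting this, both sides of the $S$-YBE equal $q^{(\text{external})}$ times the corresponding matrix element of the $R$-YBE \eqref{yb2}, and the latter are equal; so the $S$-YBE follows. The main obstacle is precisely verifying that the telescoping of the three $q^\eta$ factors is \emph{order-independent} — equivalently, that the ``cocycle'' $\eta$ is trivial on the relevant compositions; I would organize this by checking it is a coboundary, $\eta^{\gamma,\delta}_{\alpha,\beta} = \Phi(\delta,\gamma)-\Phi(\alpha,\beta)$, and that $\Phi$ is additive over tensor factors in the sense $\Phi$ on $V_k\otimes V_l\otimes V_m$ equals the sum of the three pairwise $\Phi$'s, so that both sides of YBE carry the boundary term $\Phi(\text{out})-\Phi(\text{in})$ with the same ``out'' and ``in''.
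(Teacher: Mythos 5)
Your proposal is correct and takes essentially the same route as the paper: both reduce the inversion relation and the Yang--Baxter equation to \eqref{inv} and \eqref{yb2} by writing $\eta=\sum_{i<j}(\delta_i\gamma_j-\alpha_i\beta_j)$ and checking, via weight conservation at each vertex, that the sum of the gauge exponents over the vertices of each diagram is independent of the intermediate indices and agrees on the two sides. (One small slip: with your convention for $\Phi$ the outgoing part of the external prefactor should be $\Phi(\beta'',\alpha'')+\Phi(\gamma'',\alpha'')+\Phi(\gamma'',\beta'')$ rather than its transpose, but this does not affect the argument, since only the independence of intermediates and the equality of the two sides are actually used.)
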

\begin{proof}
The inversion relation is obvious.
Consider the Yang-Baxter equation 
depicted in (\ref{ybe}).
In view of the last expression in (\ref{eta}) we concern the sum of the three 
$\eta$'s on each side:
\begin{align*}
X&= \beta_i'\alpha'_j - \alpha_i\beta_j +
\gamma''_i\beta''_j - \beta'_i\gamma'_j +
\gamma'_i \alpha''_j - \alpha'_i \gamma_j,\\
Y&= \gamma'_i \beta'_j - \beta_i\gamma_j +
\gamma''_i\alpha'_j - \alpha_i \gamma'_j +
\beta''_i\alpha''_j - \alpha'_i\beta'_j.
\end{align*}
It suffices to check 
(i) $X$  and $Y$ are independent of 
$\alpha', \beta', \gamma'$,
(ii)  $X=Y$.
The both are easy to verify by using the weight conservation condition.
\end{proof}

\begin{lemma}\label{chu}
For $U_q(A^{(1)}_1)$, the following relation is valid:
\begin{align*}
(\Delta f_1)^{s} (|0,A\rangle \otimes |0,B\rangle)
&= F(s, A+B)\sum_{a_1+b_1=s}
q^{a_1b_2}\binom{A}{a_1}_{\!q^2}\binom{B}{b_1}_{\!q^2}
|a_1,a_2\rangle \otimes |b_1,b_2\rangle,
\end{align*}
where $F$ is a known function and  
$a_2, b_2$ are determined from $a_1, b_1$ by 
$a_1+a_2=A, b_1+b_2=B$. 
\end{lemma}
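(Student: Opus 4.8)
\emph{Proof sketch.}
The plan is to recognise the left-hand side as a non-commutative $q$-binomial expansion applied to the highest weight vector, and then to reorganise the resulting scalar.  Recall from (\ref{actsA}) that, since $\delta_{1,0}=0$, the generators $f_1,k_1$ act on $V_l$ (for any $l$) by $f_1|a_1,a_2\rangle=[a_2]\,|a_1+1,a_2-1\rangle$ and $k_1^{\pm1}|a_1,a_2\rangle=q^{\pm(a_2-a_1)}|a_1,a_2\rangle$.  Put $X=f_1\ot 1$ and $Y=k_1^{-1}\ot f_1$ acting on $V_A\ot V_B$, so that $\Delta f_1=X+Y$ by (\ref{Delta}).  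From the defining relation $k_1f_1=D_{1,1}^{-1}f_1k_1$ with $D_{1,1}=q^{2}$ one gets $k_1^{-1}f_1=q^{2}f_1k_1^{-1}$, hence $YX=q^{2}\,XY$ as operators.  The non-commutative $q$-binomial theorem for such $q$-commuting operators then yields
\begin{align*}
(\Delta f_1)^{s}=(X+Y)^{s}=\sum_{a_1+b_1=s}\binom{s}{a_1}_{\!q^{2}}X^{a_1}Y^{b_1}.
\end{align*}

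Next I would evaluate each term on $|0,A\rangle\ot|0,B\rangle$.  Since the two tensor slots are independent, $X^{a_1}Y^{b_1}=(f_1^{a_1}k_1^{-b_1})\ot f_1^{b_1}$, and using $k_1^{-b_1}|0,A\rangle=q^{-Ab_1}|0,A\rangle$, $f_1^{a_1}|0,A\rangle=\bigl(\prod_{j=0}^{a_1-1}[A-j]\bigr)|a_1,a_2\rangle$ and $f_1^{b_1}|0,B\rangle=\bigl(\prod_{j=0}^{b_1-1}[B-j]\bigr)|b_1,b_2\rangle$ gives
\begin{align*}
X^{a_1}Y^{b_1}\bigl(|0,A\rangle\ot|0,B\rangle\bigr)=q^{-Ab_1}\Bigl(\prod_{j=0}^{a_1-1}[A-j]\Bigr)\Bigl(\prod_{j=0}^{b_1-1}[B-j]\Bigr)\,|a_1,a_2\rangle\ot|b_1,b_2\rangle,
\end{align*}
with $a_2=A-a_1$, $b_2=B-b_1$ as in the statement.

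It then remains to reorganise the scalar.  Converting balanced $q$-numbers by $[m]=q^{1-m}(1-q^{2m})/(1-q^{2})$ turns $\prod_{j=0}^{a_1-1}[A-j]$ into $q^{a_1-a_1A+\binom{a_1}{2}}(q^{2};q^{2})_A/\bigl((q^{2};q^{2})_{a_2}(1-q^{2})^{a_1}\bigr)$, and similarly for the $B$-product.  Combining these with $\binom{s}{a_1}_{q^{2}}=(q^{2};q^{2})_s/\bigl((q^{2};q^{2})_{a_1}(q^{2};q^{2})_{b_1}\bigr)$ and with the prefactor $q^{-Ab_1}$, the coefficient of $|a_1,a_2\rangle\ot|b_1,b_2\rangle$ becomes $(q^{2};q^{2})_s(1-q^{2})^{-s}\,q^{\mathcal E}\binom{A}{a_1}_{q^{2}}\binom{B}{b_1}_{q^{2}}$ with $\mathcal E=s-a_1A-b_1B-Ab_1+\binom{a_1}{2}+\binom{b_1}{2}$.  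Writing $a_1b_2=a_1B-a_1b_1$ and using the two identities $a_1A+a_1B+Ab_1+b_1B=s(A+B)$ and $\binom{a_1}{2}+\binom{b_1}{2}+a_1b_1=\binom{s}{2}$ (both immediate since $a_1+b_1=s$), the exponent splits as $\mathcal E=a_1b_2+\bigl(s-s(A+B)+\binom{s}{2}\bigr)$.  The first summand supplies the advertised $q^{a_1b_2}$, while the remaining, $(a_1,b_1)$-independent part together with $(q^{2};q^{2})_s(1-q^{2})^{-s}$ is the function $F(s,A+B)=(q^{2};q^{2})_s(1-q^{2})^{-s}q^{\,s-s(A+B)+\binom{s}{2}}$.

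There is no conceptual obstacle here; the only delicate point is the final bookkeeping, namely checking that once the contributions of $k_1^{-b_1}$, of the two $q$-factorials, and of the $q^{2}$-binomial $\binom{s}{a_1}_{q^{2}}$ are assembled, everything other than the clean factor $q^{a_1b_2}\binom{A}{a_1}_{q^{2}}\binom{B}{b_1}_{q^{2}}$ is genuinely independent of the split $s=a_1+b_1$ (in fact even of $A,B$ separately), so that it may be absorbed into $F(s,A+B)$.  An alternative, essentially equivalent route is induction on $s$ using the $q$-Pascal recursion for $\binom{A}{a_1}_{q^{2}}$ twisted by the factor $q^{a_1b_2}$; I would nonetheless keep the $q$-binomial-theorem argument as the primary one, since it makes the provenance of every power of $q$ transparent.
\qed
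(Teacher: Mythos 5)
Your proposal is correct and follows essentially the same route as the paper: expand $(\Delta f_1)^s$ via the noncommutative $q$-binomial theorem using the $q^2$-commutation of $f_1\ot 1$ and $k_1^{-1}\ot f_1$, act on the highest weight vectors, and check that after rewriting the $[\,\cdot\,]$-factorials in terms of $(q^2;q^2)$-symbols the leftover power of $q$ splits into $a_1b_2$ plus a term depending only on $s$ and $A+B$. Your exponent bookkeeping agrees with the paper's $\omega$ (both reduce to $s+\binom{s}{2}-s(A+B)$ after removing $a_1b_2$), and you additionally make $F(s,A+B)$ explicit, which the lemma does not even require.
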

\begin{proof}
From $k_1f_1 = q^{-2}f_1k_1$, we get
\begin{align*}
(\Delta f_1)^{s} (|0,A\rangle \otimes |0,B\rangle)
&= \sum_{a_1+b_1=s}
\binom{s}{a_1}_{\!q^2}f^{a_1}_1k_1^{-b_1}|0,A\rangle
\otimes f_1^{b_1} |0,B\rangle\\
&=\sum_{a_1+b_1=s}\binom{s}{a_1}_{\!q^2}
\frac{[A]![B]!}{[a_2]![b_2]!}
q^{-(a_1+a_2)b_1}|a_1,a_2\rangle \otimes |b_1,b_2\rangle,
\end{align*}
where $[m]!= [m][m-1]\cdots [1] 
= \frac{q^{-m(m-1)/2}(q^2)_m}{(1-q^2)^{m}}$.
The last coefficient equals 
$q^\omega\binom{s}{a_1}_{\!q^2}
\frac{(q^2)_{A}(q^2)_{B}}
{(q^2)_{a_2}(q^2)_{b_2}}\frac{1}{(1-q^2)^{s}}$ with the power
$\omega$ given by
\begin{align*}
\omega &= -\frac{(a_1+a_2)(a_1+a_2-1)}{2}+\frac{a_2(a_2-1)}{2}
-\frac{(b_1+b_2)(b_1+b_2-1)}{2}+\frac{b_2(b_2-1)}{2}
-(a_1+a_2)b_1\\
&= -\frac{(a_1+b_1)(a_1+b_1-1)}{2}
-(a_1+b_1)(a_2+b_2)+a_1b_2.
\end{align*}
Since $a_2+b_2=A+B-s$, $\omega$ is a function of 
$s$ and $A+B$ except the last term
$a_1b_2$.
\end{proof}
The most notable feature of the $S$ matrix is the following.
\begin{theorem}\label{pr:s=1}
For any $l,m \in \Z_{\ge 1}$,
the $S$ matrix 
$S(z) = S^{l,m}(z)$ enjoys the sum-to-unity property:
\begin{align}\label{msk}
\sum_{\gamma \in B_l, \delta \in B_m}
S(z)^{\gamma,\delta}_{\alpha, \beta} =1,\qquad
\forall (\alpha, \beta) \in B_l \times B_m.
\end{align}
\end{theorem}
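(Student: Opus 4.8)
The plan is to reduce the general sum-to-unity identity (\ref{msk}) to the single normalization condition (\ref{nor}) by exploiting the intertwining property (\ref{ir}). The key observation is that the vector $|0,\ldots,0,l\rangle$ is, by Remark \ref{re:anti}, the highest weight vector of $V_l$ as a $U_q(A_n)$-module, and likewise $|0,\ldots,0,m\rangle$ for $V_m$; moreover the whole of $B_l\times B_m$ is reachable from the pair $(|0,\ldots,0,l\rangle,|0,\ldots,0,m\rangle)$ by successive applications of the Chevalley generators $f_1,\ldots,f_n$ through the coproduct $\Delta$ (using only $i\neq 0$, so the spectral parameter $z$ never enters). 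So first I would record that (\ref{nor}) together with the $k_i$-weight conservation says $\sum_{\gamma,\delta}S(z)^{\gamma,\delta}_{0\ldots0l,\,0\ldots0m}=1$, i.e.\ (\ref{msk}) holds at the single ``top'' pair $(\alpha,\beta)=((0,\ldots,0,l),(0,\ldots,0,m))$.

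Next I would introduce the covector $\langle\Sigma_N|:=\sum_{\xi\in B_N}\langle\xi|$ for each degree $N$, so that (\ref{msk}) for a pair $(\alpha,\beta)$ reads $(\langle\Sigma_l|\otimes\langle\Sigma_m|)S(z)(|\alpha\rangle\otimes|\beta\rangle)=1$. The point is that $\langle\Sigma_l|\otimes\langle\Sigma_m|$ should behave like a ``cocharacter'' that is essentially invariant under the $U_q(A_n)$-action built from $\Delta$, after compensating for the $S$-twist. Concretely, from the definition (\ref{SR})--(\ref{eta}) of $S(z)$ and the action (\ref{actsA}), I would compute how $\langle\Sigma_l|\otimes\langle\Sigma_m|$ transforms when we move $\pi^l_x(f_i)\otimes 1+\pi^l_x(k^{-1}_i)\otimes\pi^m_y(f_i)=\pi^{l,m}_{x,y}(\Delta f_i)$ (for $i\neq 0$) across $S(z)$ using (\ref{ir}). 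Because $S(z)=q^\eta R(z)$ with $\eta$ given by the bilinear form $\sum_{i<j}(\delta_i\gamma_j-\alpha_i\beta_j)$, the extra $q$-powers are designed precisely so that the telescoping works: I expect to find an identity of the form
\begin{align*}
(\langle\Sigma_l|\otimes\langle\Sigma_m|)\,\pi^{m,l}_{y,x}(\Delta f_i)
= c_i\,(\langle\Sigma_l|\otimes\langle\Sigma_m|)
\end{align*}
or, more likely, that $\langle\Sigma_l|\otimes\langle\Sigma_m|$ composed with the raising operator $\pi(\Delta f_i)$ equals a scalar multiple of $\langle\Sigma_l|\otimes\langle\Sigma_m|$ composed with nothing, so that applying (\ref{ir}) repeatedly transports the already-known relation (\ref{msk}) at the top pair down to an arbitrary pair $(\alpha,\beta)$, the scalar factors on the two sides cancelling. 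This is the conceptual statement flagged in the introduction: the totality of the relations (\ref{msk}) is the $U_q(A_n)$-orbit of (\ref{nor}).

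To make the orbit argument clean I would use Lemma \ref{chu} (and its evident $A_n$-analogue applied one simple root at a time) to get the explicit expansion of $(\Delta f_i)^s$ acting on a vector supported, in coordinates $i,i+1$, at the ``highest'' configuration; the $q^{a_ib_{i+1}}$-type weights appearing there are exactly what pair up with the $\eta$-twist in $S(z)$ to produce the clean covector identity. So the induction is: assuming $(\langle\Sigma_l|\otimes\langle\Sigma_m|)S(z)=\langle\text{something}|$ on the span of all pairs of weight $\ge$ some level, push one more $f_i$ through via (\ref{ir}) and Lemma \ref{chu} to extend to the next level, checking the scalar bookkeeping. Since every $(\alpha,\beta)\in B_l\times B_m$ lies in the $U_q(A_n)$-submodule generated by the top pair (the tensor product $V_l\otimes V_m$ restricted to $U_q(A_n)$ is generated, as usual, by lowering from highest weight vectors — and here a single application suffices because we only need to hit each basis vector, not decompose irreducibly), this exhausts all pairs.

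\textbf{Main obstacle.} The routine part is the Yang--Baxter and inversion bookkeeping (already handled in Proposition \ref{pr:S}); the real work is verifying the covector identity $(\langle\Sigma_l|\otimes\langle\Sigma_m|)$ is (quasi-)invariant under the twisted $\Delta f_i$-action — that is, checking that the $q^\eta$ gauge in (\ref{SR})--(\ref{eta}) is \emph{exactly} the one for which the $f_i$-recursion closes with matching scalars on both sides of (\ref{ir}). Concretely this means: expand both sides of $\check S(z)\,\pi^{l,m}_{x,y}(\Delta f_i)=\pi^{m,l}_{y,x}(\Delta f_i)\,\check S(z)$ against $\langle\Sigma_l|\otimes\langle\Sigma_m|$ on the left, use (\ref{actsA}) to turn the $f_i$'s into shifts $\gamma\mapsto\gamma\pm\hat i$, and confirm that after resumming over $\gamma,\delta$ the $[\cdot]$-brackets and the $q$-powers coming from $\eta_{\alpha,\beta}^{\gamma,\delta}$ versus $\eta_{\alpha',\beta'}^{\gamma',\delta'}$ telescope to a single identity. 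I expect this to hinge on a short lemma of the same flavor as Lemma \ref{le:psi} — a handful of linear identities among the exponents that reduce to the weight conservation $\alpha+\beta=\gamma+\delta$ — after which the orbit/induction skeleton above closes immediately.
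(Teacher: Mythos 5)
Your overall philosophy matches the paper's (the identities (\ref{msk}) are the $U_q(A_n)$-orbit of the normalization (\ref{nor})), but the route you propose contains a genuine gap at the final spanning step. The claim that ``every $(\alpha,\beta)\in B_l\times B_m$ lies in the $U_q(A_n)$-submodule generated by the top pair'' is false: that cyclic submodule is the Cartan component of $V_l\otimes V_m$ (the $(l+m)$-fold symmetric tensor, of dimension $\binom{l+m+n}{n}$), which is a proper submodule whenever $l,m\ge 1$. Consequently, transporting the normalization through $\check S(z)$ with the $f_i$'s only evaluates the unknown covector $\langle \rho| := \sum_{\gamma,\delta}S(z)^{\gamma,\delta}_{\;\cdot\;,\;\cdot}$ against vectors of the Cartan component, i.e.\ it produces one linear relation per orbit vector among the row sums, not the individual row sums. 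Already for $n=l=m=1$ the $\Delta f_1$-orbit of $|0,1\rangle\otimes|0,1\rangle$ meets the two-dimensional middle weight space in a single line, and the transport gives only $q\,r_{(1,0),(0,1)}+q^{-1}r_{(0,1),(1,0)}=q+q^{-1}$ for the two row sums $r_{\alpha,\beta}=\sum_{\gamma,\delta}S(z)^{\gamma,\delta}_{\alpha,\beta}$, which does not force each to equal $1$. (Incidentally, the covector quasi-invariance you flag as the main obstacle is actually true: for the $S$-gauged action one finds $(\langle\Sigma_l|\otimes\langle\Sigma_m|)\bigl(\text{gauged }\Delta f_i\bigr)v=[r_{i+1}]\,(\langle\Sigma_l|\otimes\langle\Sigma_m|)v$ on the weight-$r$ space, and similarly with $[r_i]$ for $\Delta e_i$. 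But even granting this, your argument does not close with the $f_i$'s alone; one would additionally need the $e_i$-relations to force $\langle\rho|$ to vanish on the secondary highest weight vectors of the non-Cartan components.)

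The paper circumvents exactly this difficulty by a step absent from your proposal: the reversal relation (\ref{rev}). It converts the covector (row-sum) statement into a vector statement — the totality of (\ref{msk}) over all $(\alpha,\beta)$ with $\alpha+\beta=r$ is shown to be equivalent to the single identity $\check R(z)\,w^{(r)}_{l,m}=w^{(r)}_{m,l}$ for one explicit weighted vector $w^{(r)}_{l,m}$ per weight $r$. That one vector, unlike the individual basis vectors, \emph{does} lie in the $f_i$-orbit of the highest weight vector: Lemma \ref{chu} identifies $(\Delta f_1)^{r_1}(\Delta f_2)^{r_1+r_2}\cdots(\Delta f_n)^{r_1+\cdots+r_n}$ applied to $|0,\ldots,0,l\rangle\otimes|0,\ldots,0,m\rangle$ with $w^{(r)}_{l,m}$ up to a constant depending only on $r$ and $l+m$ (hence the same constant for $(m,l)$), after which the intertwining property and (\ref{nor}) finish the proof. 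So the missing ingredient in your plan is (\ref{rev}) or an equivalent self-duality of $R(z)$ with respect to the invariant bilinear form; without it, ``one relation per weight from the orbit'' cannot be upgraded to ``one relation per pair $(\alpha,\beta)$.''
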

Note that there is no constraint $l\le m$ for this assertion.
\begin{proof}
We are to show
$\sum_{\gamma,\delta}q^{\sum_{i<j}(\delta_i\gamma_j-\alpha_i\beta_j)}
R(z)^{\gamma,\delta}_{\alpha, \beta} =1$.
By means of (\ref{rev}),  the relation (\ref{msk}) is rewritten as
\begin{align}\label{mp}
\sum_{\gamma,\delta}
\frac{q^{\sum_{i<j}\gamma_i\delta_j}}{\prod_i(q^2)_{\gamma_i}(q^2)_{\delta_i}}
R(z)_{\gamma,\delta}^{\alpha, \beta} = 
\frac{q^{\sum_{i>j}\alpha_i\beta_j}}{\prod_i(q^2)_{\alpha_i}(q^2)_{\beta_i}},
\end{align}
where $\sum_{i<j}=\sum_{1 \le i < j \le n+1}$,
$\sum_{i>j}=\sum_{1 \le j < i \le n+1}$, 
$\prod_i = \prod_{1\le i \le n+1}$ and 
$\sum_{\gamma,\delta}$ is taken over 
$(\gamma,\delta) \in B_l \times B_m$.
Summing 
$(\ref{mp})\times (|\beta\rangle \otimes |\alpha\rangle)$ 
over $\alpha \in B_l, \beta \in B_m$
satisfying $\alpha+\beta=r$
for a fixed $r=(r_1,\ldots, r_{n+1}) \in \Z_{\ge 0}^{n+1}$, we get 
\begin{align*}
\sum_{\alpha+\beta=r}
\sum_{\gamma,\delta}
\frac{q^{\sum_{i<j}\gamma_i\delta_j}}{\prod_i(q^2)_{\gamma_i}(q^2)_{\delta_i}}
R(z)_{\gamma,\delta}^{\alpha, \beta}
|\beta\rangle \otimes |\alpha\rangle
=\sum_{\alpha+ \beta=r}
\frac{q^{\sum_{i>j}\alpha_i\beta_j}}{\prod_i(q^2)_{\alpha_i}(q^2)_{\beta_i}}
|\beta\rangle \otimes |\alpha\rangle.
\end{align*}
This is neatly expressed as
\begin{align*}
\check{R}(z) w^{(r)}_{l,m} = w^{(r)}_{m,l},\quad
\text{where}\quad
w^{(r)}_{l,m}= \sum_{\lambda \in B_l, \kappa\in B_m, \lambda+\kappa=r}
\frac{q^{\sum_{i<j}\lambda_i\kappa_j}}{\prod_i(q^2)_{\lambda_i}(q^2)_{\kappa_i}}
|\lambda\rangle \otimes | \kappa\rangle 
\in V_l \otimes V_m.
\end{align*}
It follows from (\ref{nor}) 
by applying $(\Delta f_1)^{r_1}
(\Delta f_2)^{r_1+r_2}\cdots (\Delta f_n)^{r_1+\cdots +r_n}$ successively
using the commutativity 
$\pi_{y,x}^{m,l}(f_i) \check{R}(z) = \check{R}(z) 
\pi_{x,y}^{l,m}(f_i)$ and 
Lemma \ref{chu}.
\end{proof}
The above proof elucidates that the sum-to-unity relations are
nothing but the $U_q(A_n)$-orbit of the unit normalization condition (\ref{nor}). 

For $\beta=(\beta_1, \ldots, \beta_n), 
\gamma = (\gamma_1,\ldots, \gamma_n) \in \Z_{\ge 0}^n$,
we define\footnote{
We will adequately mention $\Z^n$ or $\Z^{n+1}$ to avoid confusion 
and prefer to use the simpler notation $\beta$ etc. 
than bothering by writing $\bar{\beta}$ etc. except 
the inevitable coexistence within a formula like (\ref{ycn}).}
\begin{align}
\Phi_q(\gamma|\beta; \lambda,\mu) &= 
q^\xi \left(\frac{\mu}{\lambda}\right)^{|\gamma|}
\frac{(\lambda;q)_{|\gamma|}(\frac{\mu}{\lambda};q)_{|\beta|-|\gamma|}}
{(\mu;q)_{|\beta|}}
\prod_{i=1}^{n}\binom{\beta_i}{\gamma_i}_{\!q},\label{Pdef}\\
\xi&= \xi_{\beta,\gamma} = \sum_{1 \le i<j \le n} (\beta_i-\gamma_i)\gamma_j,
\label{xi}
\end{align}
where $\lambda, \mu$ are generic parameters.
By the definition $\Phi_q(\gamma|\beta;\lambda, \mu) =0$ 
unless $\gamma \le \beta$.
Note that $\beta$ and $\gamma$ here are 
$n$-component arrays rather than $n+1$ as opposed to the indices in 
$S(z)_{\gamma,\delta}^{\alpha, \beta}$.
In the case $n=1$, the power $\xi$ vanishes and 
the function (\ref{Pdef}) reproduces \cite[eq.(8)]{P}  as
\begin{align}
\varphi(m|m') = \Phi_q(m|m';{\textstyle \frac{\nu}{\mu}}, \nu)|_{n=1},
\end{align}
which is known as 
the weight function associated with $q$-Hahn polynomials.
As it turns out, 
our $U_q(A^{(1)}_n)$ generalization (\ref{Pdef}) arises as the special value of the
$S$ matrix.

\begin{proposition} \label{pr:sp}
Suppose $l \le m$.  
Given $\beta=(\beta_1,\ldots, \beta_{n+1}) \in B_m$ and 
$\gamma = (\gamma_1,\ldots, \gamma_{n+1}) \in B_l$,
set $\bar{\beta}=(\beta_1,\ldots, \beta_n)$ and 
$\bar{\gamma} = (\gamma_1,\ldots, \gamma_n)$.
Then elements of the $S$ matrix $S(z) = S^{l,m}(z)$ at $z=q^{l-m}$ 
are given by
\begin{align}\label{ycn}
S(z=q^{l-m})^{\gamma,\delta}_{\alpha, \beta}
= \delta_{\alpha+\beta}^{\gamma+\delta}\,
\Phi_{q^2}(\bar{\gamma}| \bar{\beta}; q^{-2l},q^{-2m}).
\end{align}
\end{proposition}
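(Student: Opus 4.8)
The plan is to obtain \eqref{ycn} as a direct corollary of Theorem~\ref{pr:Rs} by substituting the explicit special value $R(q^{l-m})^{\gamma,\delta}_{\alpha,\beta}$ into the defining relation $S(z)^{\gamma,\delta}_{\alpha,\beta}=q^{\eta}R(z)^{\gamma,\delta}_{\alpha,\beta}$ of \eqref{SR} and then matching the resulting $(n{+}1)$-component expression against the $n$-component function $\Phi_{q^2}(\bar\gamma|\bar\beta;q^{-2l},q^{-2m})$ in \eqref{Pdef}. First I would record that under the weight conservation constraint $\alpha+\beta=\gamma+\delta$ both sides carry the factor $\delta^{\gamma+\delta}_{\alpha+\beta}$, so it suffices to compare the nonzero parts. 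From Theorem~\ref{pr:Rs} the nonzero part of $R(q^{l-m})$ is $q^{\psi}\binom{m}{l}_{q^2}^{-1}\prod_{i=1}^{n+1}\binom{\beta_i}{\gamma_i}_{q^2}$, with $\psi$ given by \eqref{psi}; multiplying by $q^{\eta}$ from \eqref{eta}, the two double sums of the form $\sum_{i<j}\alpha_i(\beta_j-\gamma_j)$ cancel, leaving the combined power $\psi+\eta=2\sum_{1\le i<j\le n+1}(\beta_i-\gamma_i)\gamma_j$.

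The main work is then a purely combinatorial identity: I must show
\begin{equation*}
q^{2\sum_{1\le i<j\le n+1}(\beta_i-\gamma_i)\gamma_j}\binom{m}{l}_{\!q^2}^{\!-1}\prod_{i=1}^{n+1}\binom{\beta_i}{\gamma_i}_{\!q^2}
= \Phi_{q^2}(\bar\gamma|\bar\beta;q^{-2l},q^{-2m}).
\end{equation*}
To do this I would split the $(n{+}1)$-fold sum in the exponent into the part with $i,j\le n$ (which, after the substitution $q\to q^2$, is exactly $2\xi_{\bar\beta,\bar\gamma}$ of \eqref{xi}) and the part with $j=n{+}1$, namely $2\big(\sum_{i=1}^{n}(\beta_i-\gamma_i)\big)\gamma_{n+1}=2(|\bar\beta|-|\bar\gamma|)(m-|\bar\beta|-(l-|\bar\gamma|))$, using $\gamma_{n+1}=l-|\bar\gamma|$, $\beta_{n+1}=m-|\bar\beta|$ from $\gamma\in B_l$, $\beta\in B_m$. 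Likewise the product $\prod_{i=1}^{n+1}\binom{\beta_i}{\gamma_i}_{q^2}$ separates as $\binom{\beta_{n+1}}{\gamma_{n+1}}_{q^2}\prod_{i=1}^{n}\binom{\beta_i}{\gamma_i}_{q^2}$, the last $n$ factors being precisely the $q^2$-binomial product in \eqref{Pdef}.

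It then remains to verify the scalar identity
\begin{equation*}
q^{2(|\bar\beta|-|\bar\gamma|)(m-l-|\bar\beta|+|\bar\gamma|)}\binom{m-|\bar\beta|}{l-|\bar\gamma|}_{\!q^2}\binom{m}{l}_{\!q^2}^{\!-1}
= q^{2l(|\bar\beta|-|\bar\gamma|)}\,q^{-2m|\bar\gamma|}\,\frac{(q^{-2l};q^2)_{|\bar\gamma|}\,(q^{2(l-m)};q^2)_{|\bar\beta|-|\bar\gamma|}}{(q^{-2m};q^2)_{|\bar\beta|}},
\end{equation*}
which is where the factor $(\mu/\lambda)^{|\gamma|}=q^{2l(|\bar\beta|-|\bar\gamma|)-2m|\bar\gamma|}$ of \eqref{Pdef} originates. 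This is a one-variable $q$-Pochhammer manipulation: rewrite each $q^2$-binomial via $\binom{a}{b}_{q^2}=\frac{(q^2;q^2)_a}{(q^2;q^2)_b(q^2;q^2)_{a-b}}$, use the standard reflection $\frac{(q^2;q^2)_a}{(q^2;q^2)_{a-b}}=(-1)^b q^{-2ab+b(b-1)}(q^{-2a};q^2)_b$ to convert $(q^2;q^2)_m/(q^2;q^2)_{m-l}$, $(q^2;q^2)_{m-|\bar\beta|}/(q^2;q^2)_{m-l-|\bar\beta|+|\bar\gamma|}$ and the analogous ratio for $\bar\beta,\bar\gamma$, and check that the accumulated powers of $q$ and signs balance. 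I expect this last exponent-bookkeeping to be the only real obstacle; it is routine but must be done carefully, and I would present it as a short lemma or simply as ``a direct computation'' parallel to the proof of Lemma~\ref{le:psi}. No appeal to representation theory is needed here — the representation-theoretic input is entirely inside Theorem~\ref{pr:Rs}, and Proposition~\ref{pr:sp} is its algebraic shadow after the gauge twist $q^{\eta}$ and the reduction from $n{+}1$ to $n$ coordinates.
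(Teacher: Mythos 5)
Your strategy is exactly the paper's: substitute Theorem \ref{pr:Rs} into \eqref{SR}, observe that $\psi+\eta=2\sum_{1\le i<j\le n+1}(\beta_i-\gamma_i)\gamma_j$, split off the $j=n+1$ contributions, and reduce everything to a single $q$-Pochhammer identity. However, two algebra slips make the ``remaining scalar identity'' you display false, so the exponent bookkeeping you defer would not close. First, the $j=n+1$ part of the exponent is
\begin{equation*}
2\Bigl(\sum_{i=1}^{n}(\beta_i-\gamma_i)\Bigr)\gamma_{n+1}
=2(|\bar{\beta}|-|\bar{\gamma}|)\,\gamma_{n+1}
=2(|\bar{\beta}|-|\bar{\gamma}|)(l-|\bar{\gamma}|),
\end{equation*}
whereas you substituted $\beta_{n+1}-\gamma_{n+1}=m-|\bar{\beta}|-(l-|\bar{\gamma}|)$ in place of $\gamma_{n+1}=l-|\bar{\gamma}|$. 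Second, with $\lambda=q^{-2l}$ and $\mu=q^{-2m}$ one has $(\mu/\lambda)^{|\bar{\gamma}|}=q^{2(l-m)|\bar{\gamma}|}$, not $q^{2l(|\bar{\beta}|-|\bar{\gamma}|)-2m|\bar{\gamma}|}$. These two errors do not cancel: for $n=1$, $l=1$, $m=2$, $\bar{\beta}=(1)$, $\bar{\gamma}=(0)$ your displayed identity reduces to $\frac{1}{1+q^2}=\frac{q^4}{1+q^2}$. The identity you should be verifying is
\begin{equation*}
q^{2(|\bar{\beta}|-|\bar{\gamma}|)(l-|\bar{\gamma}|)}\,
\binom{m-|\bar{\beta}|}{l-|\bar{\gamma}|}_{\!q^2}\binom{m}{l}_{\!q^2}^{\!-1}
=q^{2(l-m)|\bar{\gamma}|}\,
\frac{(q^{-2l};q^2)_{|\bar{\gamma}|}\,(q^{2(l-m)};q^2)_{|\bar{\beta}|-|\bar{\gamma}|}}{(q^{-2m};q^2)_{|\bar{\beta}|}},
\end{equation*}
which does hold and can be checked by exactly the reflection manipulations you describe; it is the paper's computation of the power $\phi$ together with the check $\eta+\psi+\phi=2\xi+2(l-m)|\bar{\gamma}|$. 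With these two corrections your argument is complete and coincides with the paper's proof; the representation-theoretic content is, as you say, entirely contained in Theorem \ref{pr:Rs}.
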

\begin{proof}
Theorem \ref{pr:Rs} and (\ref{SR}) lead to
\begin{align}
S(z=q^{l-m})^{\gamma,\delta}_{\alpha, \beta}
&= \delta_{\alpha+\beta}^{\gamma+\delta}\,
q^{\eta+\psi}
\binom{m}{l}_{\!q^2}^{-1}
\prod_{i=1}^{n+1}\binom{\beta_i}{\gamma_i}_{\!q^2}.\label{ikt}
\end{align}
Using (\ref{psi}), (\ref{eta}), (\ref{xi}), 
$l= |\alpha | = |\bar{\alpha}|+\alpha_{n+1} = |\bar{\gamma}|+\gamma_{n+1}$ and 
$m= |\beta | =  |\bar{\beta}|+\beta_{n+1}$
we find 
\begin{align*}
\eta+\psi &=2 \sum_{1 \le i<j \le n+1}(\beta_i-\gamma_i)\gamma_j
= 2(|\bar{\beta}|-|\bar{\gamma}|)(l-|\bar{\gamma}|)+ 2\xi.
\end{align*}
On the other hand the two of the
$q$-binomial factors in (\ref{ikt}) are combined as
\begin{align*}
&\binom{m}{l}_{\!q^2}^{-1}
\binom{\beta_{n+1}}{\gamma_{n+1}}_{\!q^2}
=
\binom{m}{l}_{\!q^2}^{-1}
\binom{m- |\bar{\beta}|}{l- |\bar{\gamma}|}_{\!q^2}
=q^\phi\frac{(q^{2l-2m};q^2)_{|\bar{\beta}|-|\bar{\gamma}|}
(q^{-2l}; q^2)_{|\bar{\gamma}|}}
{(q^{-2m}; q^2)_{|\bar{\beta}|}},\\
&\phi = |\bar{\gamma}|(2l-|\bar{\gamma}|+1)
+(|\bar{\beta}|-|\bar{\gamma}|)(2m-2l-|\bar{\beta}|+|\bar{\gamma}|+1)
-|\bar{\beta}|(2m-|\bar{\beta}|+1).
\end{align*}
Thus the proof is finished by checking
$\eta+\psi+\phi = 2\xi+2(l-m)|\bar{\gamma}|$,
which is straightforward.
\end{proof}

In view of Proposition \ref{pr:sp}, 
Theorem \ref{pr:s=1} is rephrased in terms of an
$n$-component array $\beta$ as the identity
\begin{align}\label{pwa}
\sum_{\gamma \in \Z_{\ge 0}^n, |\gamma|\le l}
\Phi_q(\gamma |\beta; q^{-l},q^{-m}) = 1
\quad \text{for any}\;
\beta \in \Z_{\ge 0}^n\;\text{satisfying}\; |\beta| \le m
\end{align}
for any positive integers $l,m$ such that $l\le m$. 
One may remove the constraint $|\gamma|\le l$ in the sum since 
the summand vanishes otherwise.

\subsection{\mathversion{bold}Regarding $\lambda=q^{-l},\mu=q^{-m}$ 
as parameters}\label{ss:ok}

Proposition \ref{pr:S}, Theorem \ref{pr:s=1} and Proposition \ref{pr:sp} 
remain valid even when we
replace $q^{-l}$ and $q^{-m}$ with parameters $\lambda$ and $\mu$ as we shall 
explain below. 
In this subsection, we fix $q,z$, set $\lambda=q^{-l}, \mu=q^{-m}$ and
regard $\lambda, \mu$ as variables. Note that the action of 
$e_i,f_i,k_i^{\pm1}\in U_q(A^{(1)}_n)$ 
on $V_l\ot V_m$ gives rise to Laurent polynomials in $\lambda, \mu$.
We wish to show that the matrix elements $R(z)_{\alpha,\beta}^{\gamma,\delta}$
are rational functions in $\lambda,\mu$. 
Since $l$ varies, we utilize $\alpha=(\alpha_1,\ldots,\alpha_n)\in\Z_{\ge0}^n$
as a labeling of basis vectors $|\alpha_1,\ldots,\alpha_n\rangle$ of $V_l$. 
So is $\beta$ for $V_m$.
Thus the symbol $|0\rangle$ which is the abbreviation of 
$|0,\ldots,0\rangle$ is to be understood as an appropriate highest weight vector 
appearing in (\ref{nor}).
Due to the weight conservation property 
$R(z)_{\alpha,\beta}^{\gamma,\delta}=0$ unless $\alpha+\beta=\gamma+\delta$,
we concentrate on the case when $\alpha+\beta=\gamma+\delta=\varpi$ 
for some fixed weight $\varpi\in\Z_{\ge 0}^n$.  Take $N$ such that $|\varpi|<N$ and
then take $l,m$ such that $N<l,m$. 
Since $V_l\ot V_m$ is known to be irreducible over $U_q(A^{(1)}_n)$, there exist
elements $g_j\in U_q(A^{(1)}_n)$ ($j=1,\ldots,t;t=\prod_{i=1}^n(\varpi_i+1)$) such that
$\{\pi_{x,y}^{l,m}(g_j)(|0\rangle\ot|0\rangle)\mid j=1,\ldots,t\}$ spans the vector subspace
$\C\langle |\alpha\rangle\ot|\beta\rangle\mid \alpha+\beta=\varpi\rangle$ of
$V_l\ot V_m$ of weight $\varpi$. From the intertwining property \eqref{ir}, we have
\[
\check{R}(z)\pi^{l,m}_{x,y}(g_j)(|0\rangle\ot|0\rangle)
=\pi_{y,x}^{m,l}(g_j)(|0\rangle\ot|0\rangle)
\quad\text{for }j=1,\ldots,t.
\]
Here we have used the normalization \eqref{nor}. Solving the above linear equation
for $\{\check{R}(z)(|\alpha\rangle\ot|\beta\rangle)\mid \alpha+\beta=\varpi\}$,
one finds that the matrix coefficients 
$R(z)_{\alpha,\beta}^{\gamma,\delta}$ with the standard bases 
$\{|\alpha\rangle\ot|\beta\rangle\mid \alpha+\beta=\varpi\}$
are expressed by rational functions in $\lambda, \mu$.

Once we understand that $R(z)_{\alpha,\beta}^{\gamma,\delta}$ is a rational
function in $\lambda=q^{-l}, \mu=q^{-m}$, we can show that the Yang-Baxter equation 
\eqref{yb1} or \eqref{yb2} is satisfied as an identity of matrix-valued rational functions
in $\kappa=q^{-k},\lambda=q^{-l},\mu=q^{-m}$. 
To see this, fix a weight $\varpi=\alpha+\beta+\gamma$ and take an integer $N$
such that $|\varpi|<N$. Consider a particular coefficient of both sides 
of \eqref{yb2} applied to a vector 
$|\alpha\rangle\ot|\beta\rangle\ot|\gamma\rangle$ such that 
$\alpha+\beta+\gamma=\varpi$. Eliminating the denominators, both sides are
polynomials in $\kappa,\lambda, \mu$. We know that substituting 
$\kappa=q^{-k},\lambda=q^{-l},\mu=q^{-m}$ where $k,l,m$ are integers such that $N<k,l,m$, both sides
are equal to each other. Since we can choose infinitely many independent integers
for $k,l,m$, this identity must be the one as polynomials in $\kappa,\lambda, \mu$.

\subsection{\mathversion{bold}Specialized $S$ matrix
$\mathscr{S}(\lambda,\mu)$}

Based on the argument in Section \ref{ss:ok},
we move onto the situation where 
the positive integers $l, m$ are effectively replaced by 
continuous parameters $\lambda, \mu$.
We will work with the $n$-component arrays 
$\alpha=(\alpha_1,\ldots, \alpha_n) $ rather than the $(n+1)$-component ones in 
(\ref{BV}).
Set 
\begin{align*}
W = \bigoplus_{(\alpha_1,\ldots, \alpha_n) \in \Z_{\ge 0}^n}
\C|\alpha_1, \ldots, \alpha_n \rangle.
\end{align*}
The vector $|\alpha_1, \ldots, \alpha_n \rangle$ will simply be denoted by 
$|\alpha\rangle$\footnote{Note a slight notational change from 
Section \ref{ss:R} where $(n+1)$-component arrays are used as in (\ref{BV}).}.
Define the operator $\mathscr{S}(\lambda,\mu) \in \mathrm{End}(W \otimes W)$ by
\begin{align}
&\mathscr{S}(\lambda,\mu)(|\alpha\rangle \otimes | \beta\rangle ) = 
\sum_{\gamma,\delta \in 
\Z_{\ge 0}^n}\mathscr{S}(\lambda,\mu)_{\alpha,\beta}^{\gamma,\delta}
|\gamma\rangle \otimes | \delta\rangle,
\label{smdef}\\
&\mathscr{S}(\lambda,\mu)^{\gamma,\delta}_{\alpha, \beta} 
= \delta^{\gamma+\delta}_{\alpha+\beta}
\Phi_q(\gamma | \beta; \lambda,\mu), \label{lin}
\end{align}
where $\Phi_q(\gamma | \beta; \lambda,\mu)$ is specified by (\ref{Pdef}) and 
(\ref{xi}).
The sum (\ref{smdef}) is finite by the weight conservation.
In fact, the direct sum decomposition
$W \otimes W = \bigoplus_{\kappa \in \Z_{\ge 0}^n}
\left(\bigoplus_{\alpha+\beta=\kappa}\C
|\alpha\rangle \otimes | \beta\rangle\right)$ holds
and $\mathscr{S}(\lambda,\mu)$ splits into the corresponding submatrices. 
We set $\check{\mathscr{S}}(\lambda, \mu) = P \mathscr{S}(\lambda,\mu) 
\in \mathrm{End}(W \otimes W)$ and call 
$\mathscr{S}(\lambda, \mu)$ and $\check{\mathscr{S}}(\lambda, \mu)$
the specialized $S$ matrix.
From (\ref{SR}) and (\ref{ycn}), the relation
\begin{align}\label{taio}
\mathscr{S}(\lambda = q^{-l}, \mu = q^{-m})
=S^{l,m}(z=q^{l-m})|_{q\rightarrow q^{1/2}}
\end{align}
holds for $l, m \in \Z_{\ge 1}$ such that $l \le m$.
The specialized $S$ matrix $\mathscr{S}(\lambda, \mu)$ is 
an extrapolation of it into generic $l,m$.

It satisfies the Yang-Baxter equation, 
the inversion relation and the sum-to-unity condition:
\begin{align}
&\mathscr{S}_{1,2}(\nu_1,\nu_2)
\mathscr{S}_{1,3}(\nu_1, \nu_3)
\mathscr{S}_{2,3}(\nu_2, \nu_3)
=
\mathscr{S}_{2,3}(\nu_2, \nu_3)
\mathscr{S}_{1,3}(\nu_1, \nu_3)
\mathscr{S}_{1,2}(\nu_1,\nu_2),
\label{sybe}\\
&\check{\mathscr{S}}(\lambda, \mu)
\check{\mathscr{S}}(\mu,\lambda)
= \mathrm{id}_{W^{\otimes 2}},
\label{sinv}\\
&\sum_{\gamma, \delta \in \Z_{\ge 0}^n}
\mathscr{S}(\lambda,\mu)^{\gamma,\delta}_{\alpha, \beta} =1\qquad
(\forall \alpha, \beta \in \Z_{\ge 0}^n).
\label{psum}
\end{align}
They are consequences of 
Proposition \ref{pr:S}, Theorem \ref{pr:s=1} and the argument in Section \ref{ss:ok}.

\begin{remark}\label{re:dp}
As seen from (\ref{Pdef}) and (\ref{lin}), the specialized $S$ matrix 
$\mathscr{S}(\lambda, \mu)$ is a solution of the Yang-Baxter equation 
{\em without} ``difference property", meaning that 
its dependence on $\lambda$ and $\mu$ is   
not only through the combination $\lambda/\mu$.
\end{remark}

As a supplement we include a direct proof of 
(\ref{psum}), namely the identity
\begin{align}\label{syk}
\sum_{\gamma \in \Z_{\ge 0}^n, \gamma \le \beta}
\Phi_q(\gamma | \beta; \lambda,\mu)  = 1 \quad(\forall \beta \in \Z_{\ge 0}^n),
\end{align}
where the condition $\gamma \le \beta$ may be dropped but is 
exhibited for clarity in the argument below.
In terms of 
$\tilde{\Phi}^{(n)}_q(\gamma|\beta;\lambda,\mu):= q^\xi (\mu/\lambda)^{|\gamma|}
(\lambda; q)_{|\gamma|}(\mu/\lambda;q)_{|\beta|-|\gamma|}
\prod_{i=1}^{n}\binom{\beta_i}{\gamma_i}_{\!q}$,
the relation (\ref{syk}) reads 
\[
\sum_{\gamma \in \Z_{\ge 0}^n, \gamma \le \beta} 
\tilde{\Phi}^{(n)}_q(\gamma|\beta;\lambda,\mu) = (\mu;q)_{|\beta|}.
\]
We set $\nu= \mu/\lambda$.
The case $n=1$ is equivalent to 
$\sum_{j=0}^k \nu^{k-j}(\nu;q)_j 
\binom{k}{j}_{\!q}=1$ for $\forall k \in \Z_{\ge 0}$, which is easily verified.
We invoke the induction on $n$.
Define $\hat{\beta} = (\beta_2,\ldots, \beta_n)$
and similarly $\hat{\gamma}$. 
From (\ref{xi}) one has $\xi=(\beta_1-\gamma_1)|\hat{\gamma}|
+ \sum_{2 \le i<j \le n} (\beta_i-\gamma_i)\gamma_i$,
therefore the LHS is expressed as
\begin{align*}
&\sum_{\gamma_1 \le \beta_1}
\nu^{\gamma_1}(\lambda;q)_{\gamma_1}
(\nu;q)_{\beta_1-\gamma_1}
\binom{\beta_1}{\gamma_1}_{\!q}
\times 
\sum_{\hat{\gamma} \le \hat{\beta}}
\tilde{\Phi}^{(n-1)}_q(\hat{\gamma}|\hat{\beta}; \lambda q^{\gamma_1},
\mu q^{\beta_1})\\
&=\sum_{\gamma_1 \le \beta_1}
\nu^{\gamma_1}(\lambda;q)_{\gamma_1}
(\nu;q)_{\beta_1-\gamma_1}
\binom{\beta_1}{\gamma_1}_{\!q}
(\mu q^{\beta_1};q)_{|\hat{\beta}|} \\
&= (\mu;q)_{\beta_1} (\mu q^{\beta_1};q)_{|\hat{\beta}|} 
= (\mu;q)_{|\beta|},
\end{align*}
where the first and the second equalities are due to the induction assumption 
at $n=n-1$ and $n=1$, respectively.

\section{Stochastic models}\label{sec:sm}
In this and the next section, we will be exclusively concerned with systems with 
the periodic boundary condition.

\subsection{Commuting transfer matrices}
We construct two types of commuting transfer matrices 
based on the stochastic $R$ matrices $S(z)$ and $\mathscr{S}(\lambda,\mu)$.
To extract Markov processes from them
one has to find an appropriate 
specialization that fulfills the basic axioms of the Markov matrix.
This issue will be argued in Section \ref{ss:dmc1}, \ref{ss:dmc2} and \ref{ss:ctm}.

First consider the $S$ matrix $S^{l,m}(z)$ with 
positive integers $l$ and $m$.  
For $l,m_1,\ldots, m_L \in \Z_{\ge 1}$ and parameters $z, w_1, \ldots, w_L$,
set
\begin{align}\label{tdef}
T(l,z|{\textstyle {m_1,\ldots, m_L \atop w_1,\ldots, w_L})}= \mathrm{Tr}_{V_l}\left(
S^{l,m_L}_{0,L}(z/w_L)\cdots S^{l,m_1}_{0,1}(z/w_1)\right)
\in \mathrm{End}\left(V_{m_1} \otimes \cdots \otimes V_{m_L}\right).
\end{align}
In the terminology of the quantum inverse scattering method,
it is the row transfer matrix of the $U_q(A^{(1)}_n)$ vertex model 
of length $L$ with periodic boundary condition 
whose quantum space is  
$V_{m_1} \otimes \cdots \otimes V_{m_L}$ with inhomogeneity parameters 
$w_1, \ldots, w_L$ and the auxiliary space $V_l$ signified by 0 
with spectral parameter $z$.
The $S^{l,m_i}_{0,i}(z/w_i)$ is the $S$ matrix (\ref{SR}) acting as 
$S^{l,m_i}(z/w_i)$ on $V_l \otimes V_{m_i}$ and as identity elsewhere.
The dependence on $q$ has been suppressed in the notation.
Note the obvious property
$T(l,z|{\textstyle {m_1,\ldots, m_L \atop w_1,\ldots, w_L})}=
T(l, az|{\textstyle {m_1,\ldots, m_L \atop aw_1,\ldots, aw_L})}$
for any $a$.

Thanks to Proposition \ref{pr:S} and the general principle \cite{Bax}, 
it forms a commuting family:
\begin{align}\label{ask}
[T(l,z|{\textstyle {m_1,\ldots, m_L \atop w_1,\ldots, w_L})} , 
T(l',z'|{\textstyle {m_1,\ldots, m_L \atop w_1,\ldots, w_L})}]=0.
\end{align}
We write the action of 
$T=T(l,z|{\textstyle {m_1,\ldots, m_L \atop w_1,\ldots, w_L})}$ 
on the vector representing a row configuration
as\footnote{
We warn that 
$|\alpha_1,\ldots, \alpha_L\rangle$ with 
$\alpha_i =(\alpha_{i,1},\ldots, \alpha_{i,n+1}) \in B_{m_i}$ 
here is different from 
the one in (\ref{BV}).} 
\begin{align}\label{tel}
T|\beta_1,\ldots, \beta_L\rangle 
= \sum_{\alpha_i \in B_{m_i}} T_{\beta_1,\ldots, \beta_L}^{\alpha_1,\ldots, \alpha_L}
|\alpha_1,\ldots, \alpha_L\rangle 
\in V_{m_1} \otimes \cdots \otimes V_{m_L}.
\end{align}
The matrix element is depicted as the concatenation of (\ref{vertex}) as 
\begin{equation}\label{tdiag}
\begin{picture}(250,50)(10,-25)
\put(-12,0){$T_{\beta_1,\ldots, \beta_L}^{\alpha_1,\ldots, \alpha_L}=
{\displaystyle \sum_{\gamma_1,\ldots, \gamma_L \in B_l}}$}

\put(100,0){
\put(0,0){\vector(1,0){24}}
\put(12,-12){\vector(0,1){24}}
\put(-12,-2){$\gamma_L$}\put(28,-2){$\gamma_1$}
\put(9,-22){$\beta_1$}\put(9,16){$\alpha_1$}}

\put(140,0){
\put(0,0){\vector(1,0){24}}
\put(12,-12){\vector(0,1){24}}
\put(27,-2){$\gamma_2$}
\put(9,-22){$\beta_2$}\put(9,16){$\alpha_2$}}

\put(182,-3){$\cdots$}

\put(220,0){
\put(0,0){\vector(1,0){24}}
\put(12,-12){\vector(0,1){24}}
\put(-24,-2){$\gamma_{L-1}$}\put(27,-2){$\gamma_L$.}
\put(9,-22){$\beta_L$}\put(9,16){$\alpha_L$}}
\end{picture}
\end{equation}
By the construction the $T$ satisfies the weight conservation:
\begin{align}\label{wc}
T_{\beta_1,\ldots, \beta_L}^{\alpha_1,\ldots, \alpha_L} = 0
\;\;\text{unless}\;\;
\alpha_1+\cdots +\alpha_L = 
\beta_1+\cdots + \beta_L \in \Z_{\ge 0}^{n+1}.
\end{align}

Next we proceed to the transfer matrix associated with the 
specialized $S$ matrix $\mathscr{S}(\lambda,\mu)$ in (\ref{smdef}):
\begin{align}\label{ioa}
\mathscr{T}(\lambda|\mu_1,\ldots, \mu_L) = 
\mathrm{Tr}_{W}\left(
\mathscr{S}_{0,L}(\lambda,\mu_L)\cdots \mathscr{S}_{0,1}(\lambda,\mu_1)
\right)
\in \mathrm{End}(W^{\otimes L}),
\end{align}
where the notations are similar to (\ref{tdef}).
Its matrix element 
$\mathscr{T}_{\beta_1,\ldots, \beta_L}^{\alpha_1,\ldots, \alpha_L} $
is again given by (\ref{tdiag}) if 
the $i$-th vertex from the left is regarded as 
$\mathscr{S}(\lambda,\mu_i)^{\gamma_i, \alpha_i}_{\gamma_{i-1},\beta_i}$ 
in (\ref{lin}) and 
$\alpha_i$'s and the sum over $\gamma_i$'s are taken from $\Z_{\ge 0}^n$.
Since the summand vanishes unless $\gamma_i\le \beta_i$ for all $i$,
the sum (\ref{tdiag}) for $\gamma_i \in\Z_{\ge 0}^n$ is finite
and $\mathscr{T}(\lambda|\mu_1,\ldots, \mu_L)$ is well-defined.
We have the commutativity
\begin{align*}
[ \mathscr{T}(\lambda|\mu_1,\ldots, \mu_L), 
\mathscr{T}(\lambda'|\mu_1,\ldots, \mu_L) ]=0
\end{align*}
and the weight conservation analogous to (\ref{wc}).

\subsection{Discrete time Markov chain with particle number constraint}\label{ss:dmc1}
Let us extract discrete time Markov processes by specializing 
the transfer matrix (\ref{tdef}).
First we consider a system governed by the evolution equation 
\begin{align}\label{PP}
|P(t+1)\rangle = T(l,z|{\textstyle {m_1,\ldots, m_L \atop w_1,\ldots, w_L})}
|P(t)\rangle \in V_{m_1} \otimes \cdots \otimes V_{m_L}.
\end{align}
It admits an interpretation as the master equation of 
a Markov process with the discrete time variable $t$ if 
$T=T(l,z|{\textstyle {m_1,\ldots, m_L \atop w_1,\ldots, w_L})}$ 
satisfies 
\begin{enumerate}
\item  Non-negativity; all the elements (\ref{tdiag}) belong to $\R_{\ge 0}$,

\item Sum-to-unity property; $\sum_{\alpha_1,\ldots, \alpha_L}
T_{\beta_1,\ldots, \beta_L}^{\alpha_1,\ldots, \alpha_L} = 1$ for any
$(\beta_1,\ldots, \beta_L) \in B_{m_1} \times \cdots \times B_{m_L}$.
\end{enumerate}
The latter represents the total probability conservation.
In order to satisfy them, we introduce the specialization
\begin{align}\label{spee}
T(l | m_1,\ldots, m_L) 
:= T(l,q^l | {\textstyle {m_1,\ldots, \,m_L \atop q^{m_1},\ldots,q^{m_L}})}\qquad
\text{for }\; l \in \Z_{\ge 0},
\end{align}
which still forms a commuting family
$[T(l | m_1,\ldots, m_L) , T(l' | m_1,\ldots, m_L) ]=0$
as a consequence of (\ref{ask}).
Now we see that (\ref{spee}) satisfies the above conditions (i) and (ii) 
provided that
$l \le \min\{m_1,\ldots, m_L\}$ and $q \in \R_{>0}$.
In fact, $l \le \min\{m_1,\ldots, m_L\}$ implies that  
all the relevant $S$ matrices in (\ref{tdef}) are reduced to the form 
(\ref{ikt}) from which (i) is obvious.
To confirm (ii),  evaluate 
$\sum_{\alpha_1,\ldots, \alpha_L}
T_{\beta_1,\ldots, \beta_L}^{\alpha_1,\ldots, \alpha_L}$
by substituting (\ref{ycn}) into (\ref{tdef}) or (\ref{tdiag}) as
\begin{align*}
&\sum_{\alpha_i \in B_{m_i}}
\sum_{\gamma_1,\ldots, \gamma_L \in B_l}
\delta^{\alpha_1+\gamma_1}_{\gamma_L+\beta_1}
\Phi_{q^2}(\bar{\gamma}_1|\bar{\beta}_1;q^{-2l},q^{-2m_1}) 
\cdots
\delta^{\alpha_L+\gamma_L}_{\gamma_{L-1}+\beta_L}
\Phi_{q^2}(\bar{\gamma}_L|\bar{\beta}_L;q^{-2l},q^{-2m_L}) 
\\
&= \sum_{\gamma_1,\ldots, \gamma_L \in B_l}
\theta(\gamma_1 \le \gamma_L+\beta_1)
\Phi_{q^2}(\bar{\gamma}_1|\bar{\beta}_1;q^{-2l},q^{-2m_1}) 
\cdots  
\theta(\gamma_L \le \gamma_{L-1}+\beta_L)
\Phi_{q^2}(\bar{\gamma}_L|\bar{\beta}_L; q^{-2l},q^{-2m_L}).
\end{align*}
One may remove $\theta(\gamma_i \le \gamma_{i-1}+\beta_i)$ for any $i$ 
since $\Phi_{q^2}(\bar{\gamma}_i|\bar{\beta}_i;q^{-2l},q^{-2m_i})=0$
unless $\bar{\gamma}_i \le \bar{\beta}_i$.
Note further that
$\gamma_i=(\gamma_{i,1},\ldots, \gamma_{i,n+1}) \in B_l$ is
in one-to-one correspondence with 
$\bar{\gamma}_i=(\gamma_{i,1},\ldots, \gamma_{i,n}) \in \Z_{\ge 0}^n$ such that 
$|\bar{\gamma}_i| \le l$.
Therefore the sum over $\gamma_i \in B_l$ may be replaced by 
$\bar{\gamma}_i \in \Z_{\ge 0}^n$ such that $|\bar{\gamma}_i| \le l$.
Then the above sum is evaluated by applying 
$(\ref{pwa})|_{q\rightarrow q^2}$, yielding $1$.

In this way we obtain a commuting family of 
evolution systems associated with (\ref{spee})
among which the cases $l\le \min\{m_1,\ldots, m_L\}$ 
can be regarded as discrete time Markov processes.

The diagram (\ref{tdiag}) is naturally interpreted 
in terms of $n$ species of particles obeying stochastic 
dynamics on the one-dimensional lattice.
It is supplemented with an extra lane (auxiliary space)
which particles get on or get off when they leave or arrive at a site.
The local situation at the $i$-th site from the left with 
$\beta_i=(\beta_{i,1},\ldots, \beta_{i,n+1}) \in B_{m_i}$ and 
$\gamma_i = (\gamma_{i,1},\ldots, \gamma_{i,n+1}) \in B_l$ is 
depicted as follows.
\begin{equation*}
\begin{picture}(300,100)(-80,-12)

% n in
\put(-67,73){$\overbrace{n \cdots \cdot  n}^{\gamma_{i-1,n}}$}
\put(-30,75){\line(1,0){125}}
\put(95,75){\line(0,-1){25}}
\put(95,46){\vector(0,-1){14}}

%n out
\put(105,32){\line(0,1){13}}
\put(105,50){\line(0,1){25}}
\put(105,75){\vector(1,0){60}}
\put(170,73){$\overbrace{n \cdots  n}^{\gamma_{i,n}}$}

\put(0,1){
%1 in
\put(-65,44){$\overbrace{1 \cdots \cdot 1}^{\gamma_{i-1,1}}$}
\put(-30,47){\line(1,0){35}}\put(5,47){\vector(0,-1){15}}
%1 out 
\put(15,32){\line(0,1){15}}\put(15,47){\vector(1,0){150}}
\put(170,44){$\overbrace{1\cdots1}^{\gamma_{i,1}}$}
}

\put(-5,0){
\put(-5,5){$\overbrace{1 \cdots \cdot \cdot1}^{\beta_{i,1}} \;
\overbrace{2 \cdots \cdot \cdot 2}^{\beta_{i,2}} \;
\cdots \,
\overbrace{n \cdots \cdot \cdot n}^{\beta_{i,n}} \;
\overbrace{\phantom{nnnn}}^{\beta_{i,n+1}}$}

\put(7,0){\put(-16,1){\line(1,0){162}}\put(-16,1){\line(0,1){12}}
\put(146,1){\line(0,1){12}}}
\put(6,0){\put(53,-8){\vector(-1,0){68}}
\put(60,-10){$m_i$}
\put(79,-8){\vector(1,0){68}}}}

\end{picture}
\end{equation*}
The site $i$ can accommodate up to $m_i$ particles.
The $\beta_{i,a}$ is the number of particles of species $a$ 
for $a \in [1,n]$ and the vacancy for $a=n+1$.
Among the $\beta_{i,a}$ particles of species $a$,
$\gamma_{i,a}\,(\le \beta_{i,a})$ of them are moving out to the right 
while $\gamma_{i-1,a}$ are moving in from the left.
The former event contributes the factor
$\Phi_{q^2}(\bar{\gamma}_i|\bar{\beta}_i; q^{-2l}, q^{-2m_i})$
to the total rate.
The number of particles on the extra lane 
is at most $l$ at every border of the adjacent sites.
Such a dynamics is closely parallel with its deterministic counterpart,
an integrable cellular automaton known as 
{\em box-ball system} with capacity-$l$ carrier and 
capacity-$m_i$ box at site $i$.
See \cite{IKT} and references therein.
 
 \subsection{Discrete time Markov chain 
without particle number constraint}\label{ss:dmc2}
Let us proceed to the system associated with the transfer matrix 
(\ref{ioa}) whose evolution is governed by
\begin{align}\label{dmt}
|P(t+1)\rangle = \mathscr{T}(\lambda|\mu_1,\ldots, \mu_L)
|P(t)\rangle \in W^{\otimes L}.
\end{align}
Although this is an equation in an infinite-dimensional vector space,
it actually splits into finite-dimensional subspaces specified by the 
particle content as 
$\mathscr{T}(\lambda|\mu_1,\ldots, \mu_L)$ preserves the weight.
One can satisfy the axioms (i) and (ii) for the discrete time Markov process 
stated after (\ref{PP}).
In fact, the non-negativity (i)  holds if 
$\Phi_q(\gamma|\beta; \lambda,\mu_i)\ge 0$
for all $i \in [1,L]$.
This is achieved 
by taking $0 < \mu^{\epsilon}_i < \lambda ^{\epsilon} < 1, q^{\epsilon}<1$
in the either alternative $\epsilon=\pm 1$.
The sum-to-unity condition (ii) 
$\sum_{\alpha_1,\ldots, \alpha_L}
\mathscr{T}_{\beta_1,\ldots, \beta_L}^{\alpha_1,\ldots, \alpha_L} = 1$
is valid thanks to (\ref{syk}).
The resulting stochastic dynamical system is parallel with the previous one 
associated with $T(l|m_1,\ldots, m_L)$
under the formal correspondence 
$\lambda = q^{-l}, \mu_i = q^{-m_i}$.
See (\ref{taio}).
The most notable difference, however, 
is that for the generic $\lambda, \mu_1, \ldots, \mu_L$ in the present setting,
there is no upper bound 
on the number of particles occupying a site $i$ nor those hopping 
from $i$ to $i+1$ ($i \mod L$).
It is described by the $n$-component arrays 
$\beta_i, \gamma_i \in \Z_{\ge 0}^n$
with the local transition rate factor 
$\Phi_q(\gamma_i| \beta_i; \lambda,\mu_i)$ (\ref{Pdef}).
When $n=1$ and $\mu_1=\cdots = \mu_L$,
such a system was introduced originally in \cite{P}.
As discussed therein, 
one can control the number of hopping particles in various ways
by specializing $\lambda, \mu$.

\subsection{Continuous time Markov chains}\label{ss:ctm}

Let us consider the discrete time Markov process described by (\ref{dmt}) with 
the homogeneous choice of the parameters $\mu_1= \cdots = \mu_L=\mu$.
We write the relevant Markov transfer matrix (\ref{ioa}) as
\begin{align}\label{tau}
\tau(\lambda|\mu):=\mathscr{T}(\lambda|\mu, \ldots, \mu),
\end{align}
which forms a commuting family
$[\tau(\lambda|\mu), \tau(\lambda'|\mu)]=0$.
The matrix elements of (\ref{tau}) are sums of products of 
$\Phi_q(\gamma|\beta; \lambda,\mu) $ (\ref{lin})  where 
the arrays like $\beta, \gamma$ are $n$-component ones.
The discrete time Markov process (\ref{dmt}) can be converted to 
a continuous time process by
taking the either limit
$\lambda \rightarrow 1$ 
or $\lambda \rightarrow \mu$ as we shall explain below.

First we treat the case  $\lambda \rightarrow 1$.
The relevant limiting formulas are as follows\footnote{The small 
expansion parameter $\Delta$ here
should not be confused with the coproduct in (\ref{Delta}).}:
\begin{align}
\Phi_q(\gamma|\beta; 1+\Delta,\mu) 
&= \Phi_q(\gamma|\beta; 1, \mu) 
+ {\Delta}\,\Phi'_q(\gamma|\beta; 1,\mu)
+O(\Delta^2),\nonumber\\
\Phi_q(\gamma|\beta; 1, \mu)
&= \delta_{\gamma,0}, \qquad
\mathscr{S}(1,\mu)_{\alpha, \beta}^{\gamma,\delta} 
= \delta_{\alpha+\beta}^\delta \delta_{\gamma,0},
\label{g0}\\
\Phi'_q(\gamma|\beta; 1,\mu) &:=
\left.\frac{\partial \Phi_q(\gamma|\beta; \lambda, \mu)}
{\partial \lambda}\right|_{\lambda=1}=
\begin{cases}
-q^{\xi}\mu^{|\gamma|}\frac{(q)_{|\gamma|-1}}
{(\mu q^{|\beta|-|\gamma|};q)_{|\gamma|}}
\prod_{i=1}^{n}\binom{\beta_i}{\gamma_i}_{\! q}
& \text{if }\,  |\gamma|>0,\label{yum}\\
\sum_{i=0}^{|\beta|-1}\frac{\mu q^i}{1-\mu q^i}& \text{if } \, |\gamma|=0,
\end{cases}
\end{align}
where $0=(0,\ldots, 0)\in \Z^n$ 
and $\xi$ is given by (\ref{xi}).
By the definition $\Phi'_q(\gamma|\beta; 1,\mu) =0$ unless $\gamma \le \beta$.
From (\ref{g0}), the element of $\tau(\lambda|\mu)$ 
(defined and depicted similarly to (\ref{tel}) and (\ref{tdiag})) is expanded as
\begin{equation}\label{texp}
\begin{picture}(400,115)(-380,-85)

\put(-377,-2){$\tau
(\lambda=1+\Delta| \mu)^{\alpha_1,\ldots, \alpha_L}_{\beta_1,\ldots, \beta_L}=$} 
\put(-185,0){
\put(-60,0){
\put(0,0){\vector(1,0){24}}
\put(12,-12){\vector(0,1){24}}
\put(-10,-3){$0$}
\put(27,-3){$0 \;\cdots $}
\put(9,-22){$\beta_1$}\put(9,16){$\alpha_1$}}
\put(0,0){\vector(1,0){24}}
\put(12,-12){\vector(0,1){24}}
\put(-8,-3){$0$}\put(27,-3){$0$}
\put(9,-22){$\beta_i$}\put(9,16){$\alpha_i$}
\put(36,0){
\put(0,0){\vector(1,0){24}}
\put(12,-12){\vector(0,1){24}}
%\put(-10,-2){$\alpha$}
\put(27,-3){$0\;\cdots$}
\put(9,-22){$\beta_{i+1}$}\put(9,16){$\alpha_{i+1}$}}
\put(99,0){
\put(0,0){\vector(1,0){24}}
\put(12,-12){\vector(0,1){24}}
\put(-10,-3){$0$}
\put(27,-3){$0$}
\put(9,-22){$\beta_L$}\put(9,16){$\alpha_L$}}
}

\put(-185,-55){
\put(-132,-2){$+ \;\Delta {\displaystyle \sum_{i \in \Z_L}
\sum_{\gamma_i \in \Z_{\ge 0}^n}}$}

\put(-60,0){
\put(0,0){\vector(1,0){24}}
\put(12,-12){\vector(0,1){24}}
\put(-10,-3){$0$}
\put(27,-3){$0 \;\cdots $}
\put(9,-22){$\beta_1$}\put(9,16){$\alpha_1$}}
\put(0,0){\vector(1,0){24}}
\put(12,-12){\vector(0,1){24}}
\put(9.6,-2.4){$\circ$}
\put(-8,-3){$0$}\put(27,-2){$\gamma_i$}
\put(9,-22){$\beta_i$}\put(9,16){$\alpha_i$}
\put(36,0){
\put(0,0){\vector(1,0){24}}
\put(12,-12){\vector(0,1){24}}
%\put(-10,-2){$\alpha$}
\put(27,-3){$0\;\cdots$}
\put(9,-22){$\beta_{i+1}$}\put(9,16){$\alpha_{i+1}$}}
\put(99,0){
\put(0,0){\vector(1,0){24}}
\put(12,-12){\vector(0,1){24}}
\put(-10,-3){$0$}
\put(27,-3){$0\;\; \;+ \;O(\Delta^2).$}
\put(9,-22){$\beta_L$}\put(9,16){$\alpha_L$}}
}
\end{picture}
\end{equation}
The vertices here denote 
$\mathscr{S}(\lambda\!=\!1,\mu)_{\alpha, \beta}^{\gamma,\delta}$.
The first term leads to $\tau(1|\mu) = \mathrm{id}_{W^{\otimes L}}$
owing to $\mathscr{S}(1,\mu)^{0, \alpha_i}_{0, \beta_i} 
= \delta^{\alpha_i}_{\beta_i}$ by (\ref{g0}).
In the second term, the mark 
$\circ$ signifies the unique vertex corresponding to the 
derivative (\ref{yum}).
Its ``vertex weight" is equal to 
$\frac{\partial}{\partial \lambda}
\mathscr{S}(\lambda,\mu)^{\gamma_i, \alpha_i}_{0,\beta_i}
\vert_{\lambda=1}
= \delta^{\alpha_i+\gamma_i}_{\beta_i}
\Phi'_q(\gamma_i|\beta_i;1,\mu)$
calculated in (\ref{yum}).
Introduce the local (adjacent) transition rate 
$w\bigl((\alpha,\beta) \rightarrow 
(\rho,\sigma)\bigr)$ by
\begin{equation}\label{wdef}
\begin{picture}(400,50)(-150,-25)

\put(-143,-2){$-\epsilon \mu w\bigl((\alpha,\beta) \rightarrow 
(\rho,\sigma)\bigr) = 
{\displaystyle \sum_{\gamma \in \Z_{\ge 0}^n}}$}

\put(0,0){\vector(1,0){24}}
\put(12,-12){\vector(0,1){24}}
\put(9.5,-2.5){$\circ$}
\put(-8,-3){$0$}\put(27,-2){$\gamma$}
\put(9,-22){$\alpha$}\put(9,16){$\rho$}
\put(36,0){
\put(0,0){\vector(1,0){24}}
\put(12,-12){\vector(0,1){24}}
%\put(-10,-2){$\alpha$}
\put(27,-3){$0$}
\put(9,-22){$\beta$}\put(9,16){$\sigma$}}

\put(98,0){
\put(-22,-2){$=$}
\put(0,0){\vector(1,0){24}}
\put(12,-12){\vector(0,1){24}}
\put(9.5,-2.5){$\circ$}
\put(-9,-3){$0$}
\put(27,-3){$\sigma-\beta \;
= \; \delta_{\alpha+\beta}^{\rho+\sigma}\,
\Phi'_q(\alpha-\rho|\alpha; 1, \mu).$}
\put(9,-22){$\alpha$}\put(9,16){$\rho$}}

\end{picture}
\end{equation}
Here $\epsilon=\pm 1$ has been inserted to distinguish the 
two regimes of the model as we shall explain below.
The extra minus sign is included in view of that in (\ref{yum}).
The rate satisfies 
\begin{align}\label{w0}
-\epsilon\mu \sum_{\rho, \sigma \in \Z_{\ge 0}^n}
w\bigl((\alpha,\beta) \rightarrow 
(\rho,\sigma)\bigr)=
\sum_{\rho \in \Z_{\ge 0}^n}
\theta(\rho \le \alpha+\beta)\Phi'_q(\alpha-\rho|\alpha; 1, \mu)
= \sum_{\gamma \in \Z_{\ge 0}^n}\Phi'_q(\gamma|\alpha; 1, \mu)
=0,
\end{align}
where the last equality follows by differentiating (\ref{syk}) 
with respect to $\lambda$ and setting $\lambda=1$ afterwards.

According to a general construction, we introduce the
matrix $h(\mu) \in \mathrm{End}(W \otimes W)$ by
\begin{equation}\label{hact}
\begin{split}
h(\mu) |\alpha, \beta\rangle 
&= \sum_{\rho, \sigma \in \Z_{\ge 0}^n}
h(\mu)_{\alpha,\beta}^{\rho,\sigma} 
 |\rho, \sigma\rangle,\\
h(\mu)_{\alpha,\beta}^{\rho,\sigma}
&= w\bigl((\alpha,\beta) \!\rightarrow\! (\rho,\sigma)\bigr) - 
\delta_{\alpha}^{\rho}\delta_{\beta}^{\sigma}\!
\sum_{\rho', \sigma' \in \Z_{\ge 0}^n}\!\!\!
w\bigl((\alpha,\beta)\!\rightarrow\!(\rho',\sigma')\bigr)
= -\epsilon \mu^{-1}\,\delta_{\alpha+\beta}^{\rho+\sigma}\,
\Phi'_q(\alpha-\rho|\alpha; 1, \mu),
\end{split}
\end{equation}
The last equality is due to (\ref{wdef}) and (\ref{w0}).
For an interpretation as a local Markov matrix in a continuous time process,
the $h(\mu)$ should satisfy 

\vspace{0.1cm}
(i)' Non-negativity; $h(\mu)_{\alpha,\beta}^{\rho,\sigma} \ge 0$ for
$(\rho,\sigma) \neq (\alpha,\beta)$,
 
\hspace{-0.1cm}(ii)' Sum-to-zero property; 
$\sum_{\rho, \sigma}h(\mu)_{\alpha,\beta}^{\rho,\sigma}=0$,

\vspace{0.1cm}\noindent
which are analogue of (i) and (ii) mentioned after (\ref{PP})
for the discrete time case. 
We see that (i)' holds if 
$0< q^{\epsilon}, \mu^{\epsilon} <1$ from the explicit formula (\ref{yum}).
The property (ii)' is obvious by the construction. 

Now the expansion (\ref{texp}) is expressed as 
\begin{align}\label{te2}
\tau(\lambda=1+\Delta|\mu) = 
\mathrm{id}_{W^{\otimes L}} -\epsilon \mu\Delta H+O(\Delta^2),
\quad H= \sum_{i \in \Z_L}h(\mu)_{i,i+1},
\end{align}
where $h(\mu)_{i,i+1}$ is the local Markov matrix 
(\ref{hact}) acting on the $i$-th and the $(i+1)$-th sites.
Picking the $O(\Delta)$ terms  
in the time-scaled master equation
$|P(t-\epsilon\mu\Delta)\rangle = \tau(\lambda=1+\Delta|\mu)|P(t)\rangle$
and applying (\ref{te2}), 
we obtain the continuous time master equation:
\begin{align}\label{ctmp1}
\frac{d}{dt}|P(t)\rangle =H|P(t)\rangle.
\end{align}
The local Markov matrix (\ref{hact}) 
acts on the neighboring sites as follows:
\begin{align}\label{hee}
h(\mu) |\alpha, \beta\rangle 
&= -\epsilon\mu^{-1}\sum_{\gamma \in \Z_{\ge 0}^n}
\Phi'_q(\gamma|\alpha; 1,\mu)|\alpha-\gamma, \beta+\gamma\rangle,
\qquad\qquad\qquad
\begin{picture}(60,25)(0,13)
\drawline(0,5)(0,0)
\drawline(0,0)(50,0)\drawline(50,0)(50,5)
\drawline(25,0)(25,5)
\put(-1,3){
\drawline(12,12)(12,22)\drawline(12,22)(37,22)\put(37,22){\vector(0,-1){10}}
}
\put(9,4){$\alpha$}\put(33.5,4){$\beta$}
\put(21,31){$\gamma$}
\end{picture}
\end{align}
It defines a stochastic dynamics among $n$ species of particles 
on a one-dimensional lattice.
There is no constraint on the particles for sharing the same site.
They jump only to the right adjacent site without any constraint 
on their occupancy at the destination site either.
If there are $\alpha_a$ particles of species $a$ at the departure site and  
$\gamma_a (\le \alpha_a)$ of them are moving out together,
the associated transition rate is
\begin{align}\label{rate}
-\epsilon\mu^{-1}\Phi'_q(\gamma|\alpha;1, \mu)
=\epsilon 
\frac{q^{\sum_{1\le i<j \le n}(\alpha_i-\gamma_i)\gamma_j}
\mu^{\gamma_1+\cdots + \gamma_n-1}(q)_{\gamma_1+\cdots + \gamma_n-1}}
{(\mu q^{\alpha_1+\cdots +\alpha_n-\gamma_1-\cdots -\gamma_n};
q)_{\gamma_1+\cdots + \gamma_n}}
\prod_{i=1}^n
\binom{\alpha_i}{\gamma_i}_{\!q}
\end{align}
for a nontrivial case, i.e. if 
$\gamma_1+\cdots + \gamma_n\ge 1$.
For $\epsilon=\pm 1$ and the parameters $q$ and $\mu$ such that 
$0 \le q^\epsilon, \mu^\epsilon <1$, 
it defines a new $n$-species TAZRP.

When $\mu=0$, the local transition rate (\ref{rate}) is nonzero only for   
$|\gamma|=1$ or $\gamma=0$ (no transition).
In the former case it simplifies to 
\begin{align*}
\frac{1-q^{\alpha_b}}{1-q}q^{\sum_{j=1}^{b-1}\alpha_j}
\end{align*}
if $\epsilon=1$ and 
the species of the single particle to hop is $b$, i.e. $\gamma_b=1$. 
It coincides with the rate in \cite[p1]{T} 
upon reversing the labeling of the species.
The single species case $n=1$ further goes back to 
the $q$-boson model \cite{SW}.
When $n=1$ and $\epsilon=1$, the formula (\ref{rate}) for general $\mu$ 
is proportional to the rate given in \cite[p2]{T0} 
under the identification $\mu=s/(1-q+s)$.

Next, we proceed to another continuous time Markov chain which 
arises from (\ref{tau}) at $\lambda=\mu$.
As it turns out, this is closer to the usual derivation of spin chain Hamiltonians  
(cf. \cite[Chap. 10.14]{Bax})
than $\lambda=1$.
The relevant limiting formulas read
\begin{align}
\Phi_q(\gamma|\beta; \mu+\Delta,\mu) 
&= \Phi_q(\gamma|\beta; \mu, \mu) 
+ \Delta\,\Phi'_q(\gamma|\beta; \mu,\mu)
+O(\Delta^2),\nonumber\\
\Phi_q(\gamma|\beta; \mu, \mu)
&= \delta_{\gamma,\beta}, \qquad
\mathscr{S}(\mu,\mu)_{\alpha, \beta}^{\gamma,\delta} 
= \delta_{\alpha}^\delta \delta_\beta^\gamma,
\label{gk}\\
\Phi'_q(\gamma|\beta; \mu,\mu) &:=
\left.\frac{\partial \Phi_q(\gamma|\beta; \lambda, \mu)}
{\partial \lambda}\right|_{\lambda=\mu}=
\begin{cases}
\mu^{-1}q^{\xi}\frac{(q)_{|\beta|-|\gamma|-1}}
{(\mu q^{|\gamma|};q)_{|\beta|-|\gamma|}}
\prod_{i=1}^{n}\binom{\beta_i}{\gamma_i}_{\! q}
& \text{if }\,  |\beta|>|\gamma|,\label{yuk}\\
\mu^{-1}\sum_{i=0}^{|\beta|-1}\frac{-1}{1-\mu q^i}
& \text{if } \, |\beta|=|\gamma|,
\end{cases}
\end{align}
where $\xi$ is again given by (\ref{xi}).
The result (\ref{gk}) is depicted as a local shift:
\begin{equation}\label{suu}
\begin{picture}(200,45)(-90,-21)
\put(-68,-2){$\mathscr{S}(\mu,\mu)_{\alpha, \beta}^{\gamma,\delta}=$}
\put(0,0){\line(1,0){10}}\put(14,0){\vector(1,0){10}}
\put(12,-12){\line(0,1){10}}\put(12,2){\vector(0,1){10}}
\drawline(10,0)(12,2)\drawline(12,-2)(14,0)
\put(-10,-2){$\alpha$}\put(27,-2){$\gamma$}
\put(9,-22){$\beta$}\put(9,16){$\delta$}
\end{picture}
\end{equation}
Consequently the expansion of $\tau(\lambda|\mu)$ 
in the vicinity of $\lambda = \mu$ takes the form
\begin{equation}\label{texp2}
\begin{picture}(400,115)(-380,-85)

\put(-363,-2){$\tau
(\lambda=\mu+\Delta| \mu)^{\alpha_1,\ldots, \alpha_L}_{\beta_1,\ldots, \beta_L}=$} 
\put(-185,0){
\put(-60,0){
\put(0,0){\line(1,0){10}}\put(14,0){\vector(1,0){10}}
\put(12,-12){\line(0,1){10}}\put(12,2){\vector(0,1){10}}
\drawline(10,0)(12,2)\drawline(12,-2)(14,0)
\put(27,-3){$\;\cdots $}
\put(9,-22){$\beta_1$}\put(9,16){$\alpha_1$}}
\put(-15,0){
\put(0,0){\line(1,0){10}}\put(14,0){\line(1,0){10}}
\put(12,-12){\line(0,1){10}}\put(12,2){\vector(0,1){10}}
\drawline(10,0)(12,2)\drawline(12,-2)(14,0)
\put(9,-22){$\beta_i$}\put(9,16){$\alpha_i$}}
\put(9,0){
\put(0,0){\line(1,0){10}}\put(14,0){\line(1,0){10}}
\put(12,-12){\line(0,1){10}}\put(12,2){\vector(0,1){10}}
\drawline(10,0)(12,2)\drawline(12,-2)(14,0)
\put(-5,0){\put(9,-22){$\beta_{i+1}$}\put(9,16){$\alpha_{i+1}$}}}

\put(33,0){
\put(0,0){\line(1,0){10}}\put(14,0){\vector(1,0){10}}
\put(12,-12){\line(0,1){10}}\put(12,2){\vector(0,1){10}}
\drawline(10,0)(12,2)\drawline(12,-2)(14,0)
\put(30,-3){$\;\cdots$}
\put(-3,0){\put(9,-22){$\beta_{i+2}$}\put(9,16){$\alpha_{i+2}$}}}
\put(83,0){
\put(0,0){\line(1,0){10}}\put(14,0){\vector(1,0){10}}
\put(12,-12){\line(0,1){10}}\put(12,2){\vector(0,1){10}}
\drawline(10,0)(12,2)\drawline(12,-2)(14,0)
\put(9,-22){$\beta_L$}\put(9,16){$\alpha_L$}}
}

\put(-185,-55){
\put(-100,-2){$+ \;\Delta {\displaystyle \sum_{i \in \Z_L}}$}

\put(-60,0){
\put(0,0){\line(1,0){10}}\put(14,0){\vector(1,0){10}}
\put(12,-12){\line(0,1){10}}\put(12,2){\vector(0,1){10}}
\drawline(10,0)(12,2)\drawline(12,-2)(14,0)
\put(27,-3){$\;\cdots $}
\put(9,-22){$\beta_1$}\put(9,16){$\alpha_1$}}
\put(-15,0){
\put(0,0){\line(1,0){10}}\put(14,0){\line(1,0){10}}
\put(12,-12){\line(0,1){10}}\put(12,2){\vector(0,1){10}}
\drawline(10,0)(12,2)\drawline(12,-2)(14,0)
\put(9,-22){$\beta_i$}\put(9,16){$\alpha_i$}}
\put(9,0){
\put(0,0){\line(1,0){24}}
\put(12,-12){\vector(0,1){24}}
\put(9.5,-2.5){$\diamond$}
\put(-5,0){\put(9,-22){$\beta_{i+1}$}\put(9,16){$\alpha_{i+1}$}}}

\put(33,0){
\put(0,0){\line(1,0){10}}\put(14,0){\vector(1,0){10}}
\put(12,-12){\line(0,1){10}}\put(12,2){\vector(0,1){10}}
\drawline(10,0)(12,2)\drawline(12,-2)(14,0)
\put(30,-3){$\;\cdots$}
\put(-3,0){\put(9,-22){$\beta_{i+2}$}\put(9,16){$\alpha_{i+2}$}}}
\put(83,0){
\put(0,0){\line(1,0){10}}\put(14,0){\vector(1,0){10}}
\put(12,-12){\line(0,1){10}}\put(12,2){\vector(0,1){10}}
\drawline(10,0)(12,2)\drawline(12,-2)(14,0)
\put(9,-22){$\beta_L$}\put(9,16){$\alpha_L$}}
\put(120,-2){$ + \; O(\Delta^2)$.}
}
\end{picture}
\end{equation}
All the matrices appearing here as coefficients of $\Delta^k\, (k=0,1,2,\ldots)$
commute with each other.
The first term $\tau(\mu|\mu)$  gives the $\Z_L$-cyclic shift operator of the chain.
In the second term, the vertex marked with $\diamond$ signifies 
$\frac{\partial}{\partial \lambda}
\mathscr{S}(\lambda,\mu)_{\beta_i, \beta_{i+1}}^{\alpha_{i+2}, \alpha_{i+1}}
\vert_{\lambda=\mu} = \delta_{\beta_i+\beta_{i+1}}^{\alpha_{i+1}+\alpha_{i+2}}
\Phi'_q(\alpha_{i+2}|\beta_{i+1};\mu,\mu)$ 
calculated in (\ref{yuk}).

Introduce the matrix 
$\hat{h}(\mu) \in \mathrm{End}(W \otimes W)$ by
$\hat{h}(\mu)|\alpha, \beta\rangle = 
\sum_{\gamma, \delta} \hat{h}(\mu)_{\alpha, \beta}^{\gamma, \delta}
|\gamma, \delta\rangle$ with the elements 
\begin{equation*}
\begin{picture}(200,45)(-90,-21)
\put(-80,-2){$\epsilon \mu^{-1}
\hat{h}(\mu)_{\alpha, \beta}^{\gamma, \delta}\, = $}
\put(0,0){\vector(1,0){24}}\put(12,-12){\vector(0,1){24}} \put(9.5,-2.5){$\diamond$}
\put(-10,-2){$\alpha$}
\put(9,-22){$\beta$}
\put(8.5,16.6){$\gamma$}
\put(27,-2){$\delta \;\, = \,
\delta_{\alpha+\beta}^{\gamma+ \delta}\,
\Phi'_q(\delta | \beta; \mu,\mu)$,}
\end{picture}
\end{equation*}
where $\epsilon= \pm 1$ is inserted again to label the two regimes of the model. 
We remark that the positions of $\gamma$ and $\delta$ in this diagram 
have been interchanged from those in (\ref{suu}).
From (\ref{yuk})  we see that 
$\hat{h}(\mu)_{\alpha, \beta}^{\gamma, \delta} \ge 0$ for 
$(\alpha, \beta) \neq (\gamma, \delta)$ if $0 \le q^\epsilon, \mu^\epsilon <1$.
Moreover $\sum_{\gamma, \delta} \hat{h}(\mu)_{\alpha, \beta}^{\gamma, \delta} =0$
holds by the reason similar to the last equality in (\ref{w0}).
Thus $\hat{h}(\mu)$ can be interpreted as a local Markov matrix.
The expansion (\ref{texp2}) is neatly presented by switching to
the transfer matrix in the ``moving frame" 
$\hat{\tau}(\lambda|\mu) := \tau(\mu|\mu)^{-1}\tau(\lambda|\mu)$ as
\begin{align}\label{te3}
\hat{\tau}(\lambda=\mu+\Delta|\mu) &= 
\mathrm{id}_{W^{\otimes L}} + \epsilon \mu^{-1} \Delta \hat{H} + 
O(\Delta^2),
\quad
\hat{H} = \sum_{i\in \Z_L}\hat{h}(\mu)_{i,i+1},\\
\hat{h}(\mu) |\alpha, \beta\rangle 
&= \epsilon \mu \sum_{\gamma \in \Z_{\ge 0}^n}
\Phi'_q(\beta-\gamma|\beta; \mu,\mu)|\alpha+\gamma, \beta-\gamma\rangle,
\qquad\qquad\qquad
\begin{picture}(60,25)(0,13)
\drawline(0,5)(0,0)
\drawline(0,0)(50,0)\drawline(50,0)(50,5)
\drawline(25,0)(25,5)
\put(-1,3){
\put(12,22){\vector(0,-1){10}}\drawline(12,22)(37,22)
\put(37,22){\line(0,-1){10}}
}
\put(9,4){$\alpha$}\put(33.5,4){$\beta$}
\put(21,31){$\gamma$}
\end{picture}\label{hee2}
\end{align}
where the sum is finite because the summand is zero unless $\gamma \le \beta$.
From the time-scaled master equation
$|P(t+ \epsilon \mu^{-1}\Delta)\rangle 
= \hat{\tau}(\lambda = \mu+ \Delta | \mu) |P(t)\rangle$, we get the 
continuous time master equation
\begin{align}\label{ctmp2}
\frac{d}{dt}|P(t)\rangle =\hat{H}|P(t)\rangle.
\end{align}
The rate for the nontrivial transition $\gamma_1+\cdots + \gamma_n\ge 1$
is given by
\begin{align}\label{rate2}
\epsilon \mu \Phi'_q(\beta-\gamma|\beta; \mu,\mu)
= \epsilon 
\frac{q^{\sum_{1\le i<j \le n}\gamma_i(\beta_j-\gamma_j)}
(q)_{\gamma_1+\cdots + \gamma_n-1}}
{(\mu q^{\beta_1+\cdots +\beta_n-\gamma_1-\cdots -\gamma_n};
q)_{\gamma_1+\cdots + \gamma_n}}
\prod_{i=1}^n
\binom{\beta_i}{\gamma_i}_{\!q}
\end{align}
when $\gamma_a(\le \beta_a)$ among the 
$\beta_a$ particles of species $a$ in the departure site are hopping to the left.
For $\epsilon=\pm 1$, it defines another $n$-species TAZRP
depending on the parameters $q$ and $\mu$ such that 
$0 \le q^\epsilon, \mu^\epsilon <1$.

To summarize so far, we have extracted the continuous time Markov matrices 
$H$ in (\ref{te2}) and $\hat{H}$ in (\ref{te3}) 
from $\tau(\lambda|\mu)$ (\ref{tau})  by the prescription so called  
{\em Baxter's formula}:
\begin{align}\label{bax}
H = \left.-\epsilon \mu^{-1}
\frac{\partial \log 
\tau(\lambda | \mu)}{\partial \lambda}\right|_{\lambda=1},
\qquad
\hat{H} = \left. \epsilon \mu\,
\frac{\partial \log \tau(\lambda | \mu)}{\partial \lambda} \right|_{\lambda = \mu},
\end{align}
where the former may also be presented as
$H = -\epsilon \mu^{-1}
\frac{\partial}{\partial \lambda} 
\tau(\lambda | \mu)|_{\lambda=1}$
in view of $\tau(1|\mu) = \mathrm{id}_{W^{\otimes L}}$.
By the construction $[H,\hat{H}]=0$ holds.
The $H$ (resp.~$\hat{H}$) 
represents the $n$-species TAZRP in which 
particles hop to the right (resp.~left) with the local transition
rate (\ref{rate}) (resp.~(\ref{rate2})).
They admit two regimes $\epsilon = \pm 1$ in which 
the parameters $q$ and $\mu$ should be taken in the range
$0 \le q^\epsilon, \mu^\epsilon < 1$.

It turns out that the two models can be identified 
through a certain transformation.
To explain it, let us exhibit the regime/parameter dependence as 
$H(\epsilon, q, \mu)$ and $\hat{H}(\epsilon, q, \mu)$.
The key to the equivalence is the identity
\begin{align}\label{PhP}
\mu^{-1}\Phi'_{q^{-1}}(\gamma|\beta; 1, \mu^{-1}) = 
\Phi'_q(\beta-\gamma | \beta; \mu, \mu),
\end{align}
which can be directly checked from (\ref{yum}) and (\ref{yuk}).
Comparing (\ref{hee}) and (\ref{hee2}) by applying (\ref{PhP}),
one finds that the two Markov matrices are linked as
\begin{align}\label{HH}
\mu^{-1} H(-\epsilon, q^{-1}, \mu^{-1})
= \mathscr{P} \hat{H}(\epsilon,q,\mu) \mathscr{P}^{-1}.
\end{align}
Here $\mathscr{P} = \mathscr{P}^{-1} 
\in \mathrm{End}(W^{\otimes L})$ is the ``parity" operator 
reversing the sites as
$\mathscr{P}|\sigma_1,\ldots, \sigma_L\rangle = 
|\sigma_L, \ldots, \sigma_1\rangle$ which adjusts the directions of  
$\gamma$-arrows in (\ref{hee}) and (\ref{hee2}).
Thus studying either one of $H$ or $\hat{H}$ for the two regimes $\epsilon = \pm 1$
is equivalent to treating the two models concentrating on either one of the regimes.
It is intriguing that two members in the commuting family 
$\{\tau(\lambda|\mu)\}$ with respect to $\lambda$ 
are linked by the relation like (\ref{HH}).
We will explain the coincidence of the spectra implied by it 
also at the level of Bethe ansatz around (\ref{He}).

\begin{remark}\label{re:H}
For any $a,b \in \R_{\ge 0}$, the combination 
${\mathcal H}(a,b,\epsilon,q,\mu) =a H(\epsilon,q,\mu)+ b \hat{H}(\epsilon,q,\mu)$
satisfies 
${\mathcal H}(a,b,-\epsilon, q^{-1}, \mu^{-1})
= \mathscr{P}{\mathcal H}(\mu b, \mu a,\epsilon,q,\mu)\mathscr{P}^{-1}$
and possesses the spectrum obtained by superposing (\ref{He}) correspondingly. 
For $0 \le q^\epsilon, \mu^\epsilon < 1$, it defines a Markov matrix 
of the integrable asymmetric zero range process 
in which the particles can hop to the {\em both} directions.
\end{remark}

Let us include a comment on the model corresponding to $\hat{H}(1,q,0)$.
From (\ref{rate2}), the relevant transition rate is
\begin{align}\label{qKMO}
\lim_{\mu\to 0}\mu \Phi'_q(\beta-\gamma|\beta; \mu,\mu)
= \begin{cases}
{\displaystyle q^{\sum_{1 \le i<j \le n}\gamma_i(\beta_j-\gamma_j)}
(q)_{\gamma_1+\cdots + \gamma_n-1}
\prod_{i=1}^n\binom{\beta_i}{\gamma_i}_{\!q}}
& \text{if } |\gamma|\ge 1,\\
-(\beta_1+\cdots +\beta_n) & \text{if } \gamma=0.
\end{cases}
\end{align}
It defines a one-parameter family of 
integrable $n$-species TAZRP for $0 \le q<1$.
In particular at $q=0$, the local dynamics  
is frozen to the situation
$\sum_{1 \le i<j \le n}\gamma_i(\alpha_j-\gamma_j)=0$.
To digest this constraint, 
let $s$ be the minimum of the species of the particles 
that are jumping out. 
Namely, $s\in [1,n]$ is the smallest among those satisfying $\gamma_s>0$.
Then the above condition implies 
$\gamma_a=\alpha_a$ for all $a \in [s+1,n]$.
It means that all the particles with species larger than $s$
must also be jumping out simultaneously.
In other words, larger species particles 
always have the {\em priority} in the multiple particle jumps,
and all such events have an equal rate.
Such a stochastic dynamics exactly coincides with the  
$n$-species TAZRP in \cite{KMO}  with the 
homogeneous choice of the parameters 
$w_1=\cdots =w_n$ therein.
Thus (\ref{qKMO}) can be viewed as defining 
an integrable {\em $q$-melting} of it.

\begin{remark}\label{re:rbc}
Our particle interpretation here and the previous subsection is entirely
based on regarding the first $n$ components in 
the arrays $\alpha=(\alpha_1, \ldots, \alpha_{n+1})$ 
as the number of $n$ species of particles.
However it is a matter of option which components one regards so.
Changing them would lead to apparently different variety of 
stochastic dynamics of multispecies particle systems.
\end{remark}

\section{Bethe eigenvalues}\label{sec:ba}

\subsection{\mathversion{bold}Spectrum of 
$T(l,z|{\textstyle {m_1,\ldots, m_L \atop w_1,\ldots, w_L}})$}

Let $\Lambda(l,z|{\textstyle {m_1,\ldots, m_L \atop w_1,\ldots, w_L}})$
denote  the eigenvalues of the transfer matrix 
$T(l,z|{\textstyle {m_1,\ldots, m_L \atop w_1,\ldots, w_L}})$ 
(\ref{tdef}) where $l,m_i \in \Z_{\ge 1}$.
It is described by the Bethe ansatz.
See for example  \cite[Chap.7,8]{KNS} 
for a review and also \cite{FH} for a recent development.

We first illustrate the $U_q(A^{(1)}_1)$ case.
Consider the subspace of 
$\bigoplus \,\C |\alpha_1,\ldots, \alpha_L\rangle 
\in V_{m_1} \otimes \cdots \otimes V_{m_L}$
specified by the weight condition on the arrays
$\alpha_i=(\alpha_{i,1},\alpha_{i,2}) \in B_{m_i}$ as 
$(\sum_{i=1}^L\alpha_{i,1}, \sum_{i=1}^L\alpha_{i,2})
= (N_1, \sum_{i=1}^Lm_i-N_1)$.
The $T(l,z|{\textstyle {m_1,\ldots, m_L \atop w_1,\ldots, w_L}})$
is the transfer matrix of a higher spin vertex model 
whose auxiliary space is degree $l$ symmetric tensor representation $V_l$.
Its eigenvalues are given, for instance for $l=1,2$ by
\begin{align*}
\Lambda(1,z|{\textstyle {m_1,\ldots, m_L \atop w_1,\ldots, w_L}})
& = \frac{Q_1(qz)}{Q_1(q^{-1}z)}+ q^{2N_1}
\prod_{i=1}^L\left(\frac{q^{-m_i+1}w_i-z}{q^{m_i+1}w_i-z}\right)
\frac{Q_1(q^{-3}z)}{Q_1(q^{-1} z)},\\
\Lambda(2,z|{\textstyle {m_1,\ldots, m_L \atop w_1,\ldots, w_L}})
& = \frac{Q_1(q^2z)}{Q_1(q^{-2}z)}
+ q^{2N_1}
\prod_{i=1}^L\left(\frac{q^{-m_i+2}w_i-z}{q^{m_i+2}w_i-z}\right)
\frac{Q_1(q^2z)Q_1(q^{-4}z)}{Q_1(z)Q_1(q^{-2} z)}\\
&+ q^{4N_1}
\prod_{i=1}^L\left(\frac{q^{-m_i+2}w_i-z}{q^{m_i+2}w_i-z}
\frac{q^{-m_i}w_i-z}{q^{m_i}w_i-z}\right)
\frac{Q_1(q^{-4}z)}{Q_1(z)},
\end{align*} 
where 
$Q_1(z) = \prod_{k=1}^{N_1}(1-zu^{(1)}_k)$
is called the {\em Baxter $Q$ function} 
whose roots are determined by the Bethe equation:
\begin{align*}
-\prod_{i=1}^L\left(\frac{1-q^{-m_i}w_iu^{(1)}_j}{1-q^{m_i}w_iu^{(1)}_j}\right)
= q^{-2N_1}\frac{Q_1(q^2/u^{(1)}_j)}{Q_1(q^{-2}/u^{(1)}_j)}
= \prod_{k=1}^{N_1}
\frac{u^{(1)}_j-q^2u^{(1)}_k}{q^2u^{(1)}_j-u^{(1)}_k}.
\end{align*}
It is the generic pole-freeness condition of the eigenvalue formulas
despite the presence of zeroes in $Q_1(z)$.
The above $\Lambda(1,z|{\textstyle {m_1,\ldots, m_L \atop w_1,\ldots, w_L}})$
with $\forall m_i = 1$ and $w_1=\cdots = w_L$ 
corresponds to the homogeneous six-vertex model in 
\cite[eq.(8.9.13)]{Bax}.

Denote 
$T(l,z|{\textstyle {m_1,\ldots, m_L \atop w_1,\ldots, w_L}})$
simply by $T(l,z)$.
\
Then as the consequence of the fusion procedure, 
it is known to obey the {\em  T-system} (cf.  \cite{KNS}):
\begin{align*}
T(l,zq)T(l,zq^{-1}) = T(l+1,z)T(l-1,z) + 
q^{2lN_1}
\prod_{s=1}^l
\prod_{i=1}^L\left(\frac{q^{-m_i}w_i-q^{l-2s+1}z}
{q^{m_i}w_i-q^{l-2s+1}z}\right)
\mathrm{id},
\end{align*}
where $T(0,z)=\mathrm{id}$.
Solving the same recursion relation
for the eigenvalues starting from the initial condition $l=0,1$, one 
arrives at the formula for 
$\Lambda(l,z|{\textstyle {m_1,\ldots, m_L \atop w_1,\ldots, w_L}})$ with general $l$.
The result is presented neatly in terms of  
\begin{align*}
\framebox{1} _z=\frac{Q_1(qz)}{Q_1(q^{-1}z)},\quad
\framebox{2} _z=q^{2N_1}
\prod_{i=1}^L\left(\frac{q^{-m_i+1}w_i-z}{q^{m_i+1}w_i-z}\right)
\frac{Q_1(q^{-3}z)}{Q_1(q^{-1} z)}
\end{align*}
as the sum over one-row semistandard Young tableaux with entries from $\{1,2\}$.
For instance,
\begin{align*}
\Lambda(1,z|{\textstyle {m_1,\ldots, m_L \atop w_1,\ldots, w_L}})
= \framebox{1} _z + \framebox{2} _z,\quad
\Lambda(2,z|{\textstyle {m_1,\ldots, m_L \atop w_1,\ldots, w_L}})
= \framebox{1} _{zq} \framebox{1} _{zq^{-1}} 
+\framebox{1} _{zq} \framebox{2} _{zq^{-1}} 
+\framebox{2} _{zq} \framebox{2} _{zq^{-1}}.
\end{align*}

The general rank case $U_q(A^{(1)}_n)$ is quite parallel.
We consider the weight space 
$\bigoplus \,\C |\alpha_1,\ldots, \alpha_L\rangle 
\in V_{m_1} \otimes \cdots \otimes V_{m_L}$
specified by the following condition on the arrays
$\alpha_i=(\alpha_{i,1},\ldots, \alpha_{i,n+1}) \in B_{m_i}$
\begin{align}\label{wcon}
\sum_{i=1}^L \alpha_{i,a} 
&= \delta_{a,n+1}\sum_{i=1}^L m_i+ N_a-N_{a-1}\quad (a \in[1, n+1]),
\end{align}
where 
$0 \le N_1 \le \cdots \le N_n \le \sum_{i=1}^Lm_i$
and $N_0=N_{n+1}=0$.
Introduce the functions\footnote{
Reflecting Remark \ref{re:anti}, we switch to
the dual tableaux with ``hole" entries meaning
$\framebox{$\overline{1}$} = \framebox{2},
\framebox{$\overline{2}$} = \framebox{1}$ for $n=1$.}
\begin{align}
\framebox{$\overline{a}$}_z &= q^{2N_a}
\frac{Q_{a-1}(q^{a-n}z)Q_a(q^{a-3-n}z)}
{Q_{a-1}(q^{a-2-n}z)Q_a(q^{a-1-n}z)}
\prod_{i=1}^L\left(\frac{q^{-m_i+1}w_i-z}{q^{m_i+1}w_i-z}\right)^{\theta(a\le n)}
\quad (a \in [1,n+1]),
\label{taba}\\
Q_a(z) &= \prod_{k=1}^{N_a}(1-zu^{(a)}_k)\;\;(a \in [1,n]),\quad
Q_0(z)=Q_{n+1}(z)=1.\nonumber
\end{align}
The numbers $\{u^{(a)}_j\mid a \in [1,n], j \in [1,N_a]\}$ are solutions to the
Bethe equation:
\begin{align}\label{bae}
-\prod_{i=1}^L\left(\frac{1-q^{-m_i}w_iu^{(n)}_j}
{1-q^{m_i}w_iu^{(n)}_j}\right)^{\!\delta_{a,n}}
= q^{2N_{a+1}-2N_a}
\frac{Q_{a-1}(q^{-1}/u^{(a)}_j)Q_{a}(q^{2}/u^{(a)}_j)Q_{a+1}(q^{-1}/u^{(a)}_j)}
{Q_{a-1}(q/u^{(a)}_j)Q_{a}(q^{-2}/u^{(a)}_j)Q_{a+1}(q/u^{(a)}_j)}.
\end{align}
The eigenvalues
$\Lambda(l,z|{\textstyle {m_1,\ldots, m_L \atop w_1,\ldots, w_L}})$ of 
(\ref{tdef}) on the subspace with the weight (\ref{wcon}) are expressed as the 
sum over the tableaux:
\begin{align}\label{Lam}
\Lambda(l,z|{\textstyle {m_1,\ldots, m_L \atop w_1,\ldots, w_L}})
&= \sum_{n+1\ge a_1 \ge a_2 \ge \cdots \ge a_l \ge 1}
\framebox{$\overline{a}_1$}_{zq^{l-1}}
\framebox{$\overline{a}_2$}_{z q^{l-3}}
\cdots \;\framebox{$\overline{a}_l$}_{z q^{-l+1}},
\end{align}
where the summands stand for products of (\ref{taba}).
They correspond exactly to the semistandard tableaux 
on $n \times l$ rectangle  
provided that  (\ref{taba}) is regarded as the single column 
filled with $\{1,2,\ldots, n+1\}\setminus \{ a\}$.

\begin{example}
For $n=2$ one has
\begin{align}\label{koy}
\Lambda(1,z|{\textstyle {m_1,\ldots, m_L \atop w_1,\ldots, w_L}})
&= \frac{Q_2(qz)}{Q_2(q^{-1}z)}
+ 
\prod_{i=1}^L\left(\frac{q^{-m_i+1}w_i-z}{q^{m_i+1}w_i-z}\right)
\left(
q^{2N_2}\frac{Q_1(z)Q_2(q^{-3}z)}{Q_1(q^{-2}z)Q_2(q^{-1} z)}
+
q^{2N_1}\frac{Q_1(q^{-4}z)}{Q_1(q^{-2}z)}
\right)
\end{align}
and the Bethe equation:
\begin{align*}
-1 &= q^{2N_{2}-2N_1}
\frac{Q_{1}(q^{2}/u^{(1)}_j)Q_{2}(q^{-1}/u^{(1)}_j)}
{Q_{1}(q^{-2}/u^{(1)}_j)Q_{2}(q/u^{(1)}_j)},\\
-\prod_{i=1}^L\frac{1-q^{-m_i}w_iu^{(2)}_j}{1-q^{m_i}w_iu^{(2)}_j}
&= q^{-2N_2}
\frac{Q_{1}(q^{-1}/u^{(2)}_j)Q_{2}(q^{2}/u^{(2)}_j)}
{Q_{1}(q/u^{(2)}_j)Q_{2}(q^{-2}/u^{(2)}_j)}.
\end{align*}
Examples of actual 
eigenvalues and Bethe roots are available in Example \ref{ex:la}.
\end{example}

In general let us separate the sum (\ref{Lam}) into two cases 
according to $a_l=n+1$ or $a_l \le n$.
The former consists of the single term
corresponding to 
$a_1=\cdots = a_l=n+1$, whereas the latter 
always contains 
$\prod_{i=1}^L\frac{q^{-m_i}w_i-q^{-l}z}{q^{m_i}w_i-q^{-l}z}$.
This leads to the decomposition
\begin{align}\label{Lam2}
\Lambda(l,z|{\textstyle {m_1,\ldots, m_L \atop w_1,\ldots, w_L}})
= \frac{Q_n(q^lz)}{Q_n(q^{-l}z)} + 
\prod_{i=1}^L\left(\frac{q^{-m_i}w_i-q^{-l}z}{q^{m_i}w_i-q^{-l}z}\right)
X(z),
\end{align}
where $X(z)$ is a rational function without a pole at 
$z = q^l$ in general.

\subsection{\mathversion{bold}Spectrum of $T(l|m_1,\ldots, m_L)$}
Now we are ready to derive the spectrum of 
the discrete time Markov matrix 
$T(l|m_1,\ldots, m_L)$ in (\ref{spee}).
Under the specialization $z=q^l$ and $w_i= q^{m_i}$,
the second term in (\ref{Lam2}) vanishes, therefore 
the eigenvalue formula takes the factorized form
\begin{align}\label{Lam2.5}
\Lambda(l,q^l|{\textstyle {m_1,\ldots, m_L \atop q^{m_1},\ldots, q^{m_L}}})
&= \frac{Q_n(q^{2l})}{Q_n(1)} = 
\prod_{j=1}^{N_n}\frac{1-q^{2l}u^{(n)}_j}{1-u^{(n)}_j}
\end{align}
in terms of $u^{(n)}_j$'s that are determined from the 
specialized Bethe equation:
\begin{align}\label{bae2}
-\prod_{i=1}^L\left(\frac{1-u^{(n)}_j}{1-q^{2m_i}u^{(n)}_j}\right)^{\!\delta_{a,n}}
= q^{2N_{a+1}-2N_a}
\frac{Q_{a-1}(q^{-1}/u^{(a)}_j)Q_{a}(q^{2}/u^{(a)}_j)Q_{a+1}(q^{-1}/u^{(a)}_j)}
{Q_{a-1}(q/u^{(a)}_j)Q_{a}(q^{-2}/u^{(a)}_j)Q_{a+1}(q/u^{(a)}_j)}.
\end{align}

\subsection{\mathversion{bold}Spectrum of 
$\mathscr{T}(\lambda|\mu_1,\ldots, \mu_L)$}

The Markov transfer matrix 
$\mathscr{T}(\lambda|\mu_1,\ldots, \mu_L)$ was defined in (\ref{ioa}).
Below we write down 
a natural extrapolation of the results in the previous subsection 
in view of the correspondence (\ref{taio}) 
although their rigorous derivation is yet to be supplied. 

The eigenvalues $\Lambda(\lambda|\mu_1,\ldots, \mu_L)$
of $\mathscr{T}(\lambda|\mu_1,\ldots, \mu_L)$ and the 
Bethe equation are given by
\begin{align}
&\Lambda(\lambda|\mu_1,\ldots, \mu_L) 
= \prod_{j=1}^{N_n}
\frac{1-\lambda^{-1}u^{(n)}_j}{1-u^{(n)}_j},
\label{Lam3}\\
&-\prod_{i=1}^L\left(
\frac{1-u^{(n)}_j}{1-\mu^{-1}_iu^{(n)}_j}\right)^{\!\delta_{a,n}}
=
\prod_{k=1}^{N_{a-1}}\frac{u^{(a)}_j-u^{(a-1)}_k}{u^{(a)}_j-qu^{(a-1)}_k}
\prod_{k=1}^{N_{a}}\frac{u^{(a)}_j-qu^{(a)}_k}{qu^{(a)}_j-u^{(a)}_k}
\prod_{k=1}^{N_{a+1}}\frac{qu^{(a)}_j-u^{(a+1)}_k}{u^{(a)}_j-u^{(a+1)}_k},
\label{bae3}
\end{align}
where $q^{1/2}$ has been avoided by replacing 
$u^{(a)}_j$ in $(\ref{bae2})|_{q\rightarrow q^{1/2}}$  
with $q^{(n-a)/2}u^{(a)}_j$. 

\subsection{\mathversion{bold}Spectrum of $\tau(\lambda|\mu)$, $H$ and $\hat{H}$}
Let us further specialize (\ref{Lam3}) and (\ref{bae3}) 
so as to fit 
$\tau(\lambda|\mu)$ in (\ref{tau}).
By setting $\mu_i=\mu$,
the eigenvalues of $\tau(\lambda|\mu)$ (denoted by the same symbol) 
and the relevant Bethe equation are given by 
\begin{align}
&\tau(\lambda|\mu) = \prod_{j=1}^{N_n}
\frac{1-\lambda^{-1}u^{(n)}_j}{1-u^{(n)}_j},
\label{Lam4}\\
&-\left(
\frac{1- u^{(n)}_j}{1-\mu^{-1}u^{(n)}_j}\right)^{\!\!L\delta_{a,n}}
=
\prod_{k=1}^{N_{a-1}}\frac{u^{(a)}_j-u^{(a-1)}_k}{u^{(a)}_j-qu^{(a-1)}_k}
\prod_{k=1}^{N_{a}}\frac{u^{(a)}_j-qu^{(a)}_k}{qu^{(a)}_j-u^{(a)}_k}
\prod_{k=1}^{N_{a+1}}\frac{qu^{(a)}_j-u^{(a+1)}_k}{u^{(a)}_j-u^{(a+1)}_k}.
\label{bae4}
\end{align}
When $n=1$, these results reduce to \cite[eq.(38) and Bethe eq.~on p17]{P}
by replacing $(u^{(1)}_j, \mu,\lambda)$ with $(\nu u_j, \nu, \nu/\mu)$.
From (\ref{bax}), eigenvalues of the continuous time 
Markov matrices $H$ and $\hat{H}$ (denoted by the same symbols) 
are obtained by differentiation with respect to $\lambda$.
Since the Bethe roots are independent of $\lambda$, they are given by
\begin{align}\label{He}
H = -\epsilon \sum_{j=1}^{N_n}\frac{ \mu^{-1}u^{(n)}_j}{1-u^{(n)}_j},
\qquad
\hat{H} = \epsilon \sum_{j=1}^{N_n}\frac{u^{(n)}_j}{\mu-u^{(n)}_j}
\end{align}
in terms of solutions to the same Bethe equation (\ref{bae4}).
One can detect the ``spectral equivalence" implied by (\ref{HH}) 
also from the Bethe ansatz result here.
Denote the system of Bethe equations (\ref{bae4}) symbolically by 
$\mathcal{B}(\{u^{(a)}_j\},q,\mu)$ and the eigenvalue formulas
(\ref{He}) by $H(\{u^{(n)}_j\}, \epsilon, \mu)$ and 
$\hat{H}(\{u^{(n)}_j\}, \epsilon, \mu)$.
Then it is easy to see that 
$\mathcal{B}(\{u^{(a)}_j\},q,\mu)$ is equivalent to 
$\mathcal{B}(\{v^{(a)}_j\},q^{-1},\mu^{-1})$ with $v^{(a)}_j = \mu^{-1}q^{n-a}u^{(a)}_j$
and 
$\mu^{-1}H(\{v^{(n)}_j\}, -\epsilon,\mu^{-1})
= \hat{H}(\{u^{(n)}_j\}, \epsilon, \mu)$.

\subsection{Steady state eigenvalue}

The steady states in the discrete and continuous time
Markov processes are characterized as the one-dimensional subspace having
eigenvalues 1 and 0 for the relevant Markov matrices, respectively.
In our case, they correspond to the solution of the Bethe equation such that
$\forall u^{(n)}_j = 0$ in (\ref{Lam2.5}),  (\ref{Lam3}), (\ref{Lam4}) and (\ref{He}).

For $n=1$, there remains no other Bethe equation to be solved,
indicating that the steady state is uniform 
(or possesses a product measure at most)  
under the periodic boundary condition
as emphasized in \cite{P, EMZ}.
In general the steady state for $n\ge 2$ is nontrivial.
However at least on the level of Bethe roots, they exhibit the same simplifying
feature as the $n=1$ case.
The following example is an exposition of this fact.
\begin{example}\label{ex:la}
Let $n=2$ and 
consider the transfer matrix $T(1,z|{1,1,1 \atop 1,1,1})$ (\ref{tdef}) for the length  
$L=3$ chain.
We concentrate on the sector 
specified by $(N_1,N_2)=(1,2)$ in (\ref{wcon}).
It is the six-dimensional space 
$\oplus_{(i,j,k):\text{permutations of } (1,2,3)} \C|i,j,k\rangle$,
where $1=(1,0,0), 2=(0,1,0), 3=(0,0,1)$ in the previous notation.
The six eigenvalues denoted by 
$\Lambda_1, \Lambda_2, 
\Lambda_3^{\pm},
\Lambda_4^{\pm}$ and 
the corresponding Baxter $Q$ functions 
$Q_1=Q_1(z)$ and $Q_2=Q_2(z)$ by which 
they are expressed as 
$\Lambda(1,z|{1,1,1 \atop 1,1,1}) 
= \Lambda(1,qz|{1,1,1 \atop q,q,q})$ in (\ref{koy}) are given as follows:
\begin{align*}
\Lambda_1 &=
1 + \left(\frac{1-z}{q^2-z}\right)^3(q^2+q^4),
\qquad
Q_1= 1, \quad Q_2=1,\\
\Lambda_2 &= \frac{3 q^6 z^2-3 q^6 z+q^6-q^4 z^3
-3 q^4 z+q^4-q^2 z^3+3 q^2 z^2+q^2-z^3+3 z^2-3z}{\left(q^2-z\right)^3},\\
 &Q_1 = 1-\frac{3 q^2 z}{\left(q^2-q+1\right) \left(q^2+q+1\right)},
 \quad
 Q_2 =\frac{3 q^2 z^2}{\left(q^2-q+1\right) \left(q^2+q+1\right)}-
 \frac{3 q \left(q^2+1\right)z}
 {\left(q^2-q+1\right) \left(q^2+q+1\right)}+1,\\
 \Lambda_3^{\pm} &=
 -\frac{3 q^6 z-2 q^6+2 q^4 z^3-12 q^4 z^2
 +9 q^4 z-2 q^4+2 q^2 z^3+3 q^2 z\pm i \sqrt{3}
\left(q^2-1\right)^3 z-2 q^2+2 z^3-6 z^2+3 z}{2 \left(q^2-z\right)^3},\\
&Q_1=
 1+\frac{\left(\sqrt{3} q^2\mp 3 i q^2-\sqrt{3}\mp 3 i\right) q^2 z}
 {2 (-1\pm i q) (q\mp i)
\left(q^2-q+1\right) \left(q^2+q+1\right)},\quad
Q_2= 1\mp \frac{i q \left(\sqrt{3} q^2 \mp 3 i q^2-\sqrt{3} \mp 3 i\right) z}
{2 \left(q^2-q+1\right)\left(q^2+q+1\right)},\\
\Lambda_4^{\pm} &=
\frac{3 q^6 z^2-6 q^6 z+2 q^6-2 q^4 z^3+3 q^4 z^2
+2 q^4-2 q^2 z^3+9 q^2 z^2\mp i\sqrt{3}
\left(q^2-1\right)^3 z^2-12 q^2 z+2 q^2-2 z^3+3 z^2}{2\left(q^2-z\right)^3},\\
&Q_1=1,\quad 
Q_2 = \frac{q^2 \left(\sqrt{3} q^2\pm 3 i q^2-\sqrt{3} \pm3 i\right) z^2}
{2 (-1\pm i q) (q\mp i)\left(q^2-q+1\right)
\left(q^2+q+1\right)}-\frac{3 q z}{q^2+1}+1.
\end{align*}
Note that $\Lambda_1=1$  under the 
specialization $z=1$ to the stochastic point.
\end{example}

General case is similar.
We conjecture that the unique eigenvalue 
$\Lambda_{\mathrm{sst}}
(l,z|{\textstyle {m_1,\ldots, m_L \atop w_1,\ldots, w_L}})$ 
relevant to the steady state 
corresponds to the Baxter $Q$ functions $\forall Q_a(z)=1$  in (\ref{Lam}),
or equivalently $\forall u^{(a)}_j = 0$.
From (\ref{taba}) and (\ref{Lam}), it reads explicitly as
\begin{align}\label{esst}
\Lambda_{\mathrm{sst}}
(l,z|{\textstyle {m_1,\ldots, m_L \atop w_1,\ldots, w_L}})
=1 + \sum_{r=1}^l \prod_{i=1}^L
\frac{(q^{l-m_i}w_i/z;q^{-2})_{l-r+1}}{(q^{l+m_i}w_i/z;q^{-2})_{l-r+1}}
\sum_{n \ge a_r \ge a_{r+1} \ge \cdots \ge a_l \ge 1}
q^{2N_{a_r}+2N_{a_{r+1}}+\cdots + 2N_{a_l}}.
\end{align}
It indeed satisfies 
$\Lambda_{\mathrm{sst}}
(l,z=q^l|{\textstyle {m_1,\ldots, m_L \atop q^{m_1},\ldots, q^{m_L}}})=1$.
This eigenvalue is exceptional in that 
the Bethe equations are trivially satisfied\footnote{To see 
$\forall u^{(a)}_j = 0$ is a solution of the 
Bethe equation, multiply (\ref{bae3}) or (\ref{bae4})
by their denominators.}.

On the other hand, steady states themselves  
are nontrivial for multispecies case $n\ge 2$.
\begin{example}\label{ex:ss1}
In the $n=2$ species continuous time Markov process (\ref{ctmp2}) with the 
local transition rate (\ref{rate2}) and $\epsilon=1$, 
the (unnormalized) steady state in the 
sector $(N_1,N_2)=(1,2)$ for $L=3,4$ takes the form
$|\bar{P}_L\rangle + \text{cyclic permutations}$ with 
\begin{align*}
|\bar{P}_3\rangle
&=3(1-q\mu)|\emptyset,\emptyset,12\rangle 
+(2+q)(1-\mu)|\emptyset,2,1\rangle
+(1+2q)(1-\mu)|\emptyset,1,2\rangle,\\
|\bar{P}_4\rangle
&=4(1-q\mu)|\emptyset,\emptyset,\emptyset, 12\rangle
+(3+q)(1-\mu)|\emptyset,\emptyset,2,1\rangle
+2(1+q)(1-\mu)|\emptyset,1,\emptyset, 2\rangle
+(1+3q)(1-\mu)|\emptyset,\emptyset, 1,2\rangle,
\end{align*}
where $\emptyset = (0,0), 1=(1,0), 2=(0,1)$ and $12=(1,1)$.

The same data for the model with the adjacent transition rate 
(\ref{qKMO}) read
\begin{align*}
|\bar{P}'_3\rangle
&=3|\emptyset,\emptyset,12\rangle 
+(2+q)|\emptyset,2,1\rangle
+(1+2q)|\emptyset,1,2\rangle,\\
|\bar{P}'_4\rangle
&=4|\emptyset,\emptyset,\emptyset, 12\rangle
+(3+q)|\emptyset,\emptyset,2,1\rangle
+2(1+q)|\emptyset,1,\emptyset, 2\rangle
+(1+3q)|\emptyset,\emptyset, 1,2\rangle,
\end{align*}
which indeed agree with $|\bar{P}_3\rangle$ and $|\bar{P}_4\rangle$ with $\mu=0$.
In another sector $(N_1,N_2)=(2,3)$, the corresponding data are given by
\begin{align*}
|\bar{P}''_3\rangle
&=3(2+q)|\emptyset,\emptyset,112\rangle 
+3(1+q+q^2)|\emptyset,2,11\rangle
+ 3(1+q)(1+q+q^2)|\emptyset,1,12\rangle\\
&+ (1+q)(5+2q+2q^2)|\emptyset,12,1\rangle
+ (1+2q+5q^2+q^3)|\emptyset,11,2\rangle
+ (1+q)(2+q)(1+q+q^2)|1,1,2\rangle,\\
|\bar{P}''_4\rangle
&=2(5+3q)|\emptyset,\emptyset, \emptyset,112\rangle 
+2 (3 + 3 q + 2 q^2)|\emptyset,\emptyset, 2,11\rangle 
+2 (1 + q) (2 + 3 q + 3 q^2)  |\emptyset,\emptyset, 1,12\rangle \\
&+(1 + q) (9 + 4 q + 3 q^2)|\emptyset,\emptyset, 12,1\rangle 
+(1+3q+9q^2+3q^3)|\emptyset,\emptyset, 11,2\rangle
+(3 + 5 q + 7 q^2 + q^3) |\emptyset, 2,\emptyset, 11\rangle\\
&+(1 + q) (5 + 5 q + 5 q^2 + q^3)|\emptyset, 2,1,1\rangle
+(1 + q) (7 + 4 q + 5 q^2) |\emptyset, 1,\emptyset, 12\rangle\\
&+(1 + q)^2(3 + 3 q + 2 q^2) |\emptyset, 1,2,1\rangle
+(1 + q)^2 (2 + 3 q + 3 q^2) |\emptyset, 1,1,2\rangle,
\end{align*}
where $11=(2,0)$ and $112=(2,1)$.
The specialization of $|\bar{P}'_3\rangle$, $|\bar{P}'_4\rangle$ and $|\bar{P}''_3\rangle$ 
at $q=0$  exactly reproduce 
$|\xi_3(1,1)\rangle$, $|\xi_4(1,1)\rangle$ and $|\xi_3(2,1)\rangle$
available in \cite[Ex. 2.1$|_{\forall  w_a=1}$]{KMO}. 
It is notable that the (unnormalized) steady state probabilities are polynomials in $q$ with
{\em nonnegative} integer coefficients.
\end{example}

As these examples indicate, 
steady states for multispecies case $n \ge 2$ are involved but 
{\em algebraic}\footnote{Of course this must be so 
since the null space of the Markov matrix is one-dimensional and their
elements are algebraic.}  in that 
no transcendental input from nontrivial solutions to the Bethe equation 
is required.
The steady states are known to 
exhibit rich combinatorial and algebraic structures related to the 
crystal base of quantum groups and the tetrahedron equation 
already at $q=0$ \cite{KMO2}.
Their systematic investigation will be presented elsewhere.

\section{Summary}\label{sec:end}

In this paper we have explored new prospects of  
the $U_q(A^{(1)}_n)$ quantum $R$ matrix 
for the symmetric tensor representation $V_l \otimes V_m$
which have applications to integrable stochastic models in 
non-equilibrium statistical mechanics.

The $R$ matrix $R(z)$ has been shown to factorize at $z=q^{l-m}$ 
for $l \le m$ from which the non-negativity is manifest 
in an appropriate range of the remaining parameters (Theorem \ref{pr:Rs}).
We have found a suitable gauge $S(z)$ (\ref{SR}) of $R(z)$ 
which satisfies  the sum rule (Theorem \ref{pr:s=1}) as well as the 
Yang-Baxter equation (Proposition \ref{pr:S}).
We have also introduced the specialized $S$ matrix 
$\mathscr{S}(\lambda, \mu)$ corresponding to the 
extrapolation of $S(z=q^{l-m})$ to generic $l, m$.
It also satisfies the non-negativity, the sum rule (\ref{psum}), (\ref{syk}) and 
the Yang-Baxter equation without ``difference property" (Remark \ref{re:dp}).
 
Based on the stochastic $R$ matrices $S(z)$ and $\mathscr{S}(\lambda, \mu)$,
we have constructed new integrable Markov chains described in terms of
$n$ species of particles obeying asymmetric dynamics.
They are discrete time systems with (Section \ref{ss:dmc1}) 
and without (Section \ref{ss:dmc2})  
constraints on the number of particles at lattice sites 
and those hopping to the neighboring site at one time step.
The other ones (Section \ref{ss:ctm}) are $n$-species TAZRPs
corresponding to continuous time limits of that in Section \ref{ss:dmc2}.
Two such TAZRPs associated to the ``Hamiltonian points" $\lambda=1$ and $\lambda=\mu$ 
of the Markov transfer matrix are obtained and their interrelation  (\ref{HH}) has been clarified.
They admit a superposition yielding an integrable asymmetric zero range process
in which $n$ species of particles can hop to either direction (Remark \ref{re:H}).

The Markov matrices in these models are specializations 
of the commuting transfer matrices whose spectra are well-known 
by the Bethe ansatz in the theory of quantum integrable systems.
However, the precise adjustment to the present stochastic setting
demands some work. 
We have given the resulting Bethe eigenvalue formulas for all the 
models under the periodic boundary condition (Section \ref{sec:ba}).
In particular, the eigenvalues relevant to the steady states are
found to correspond to the trivial choice $\forall Q_a(z)=1$ of the  
Baxter $Q$ functions.
This explains the algebraic (non-transcendental)
nature of the steady states from the Bethe ansatz point of view,
indicating a possible 
alternative approach by the method of {\em matrix products}. 
These issues will be addressed elsewhere.

\appendix

\section{Example of explicit forms of quantum $R$ matrices}\label{app:R}

For $U_q(A^{(1)}_n)$, 
the matrix elements of $R(z)$ on $V_1 \ot V_m$ are as follows:
\begin{align*}
R(z)^{e_k,\delta}_{e_j,\beta}=
\begin{cases}
q^{\beta_k+1}\frac{1-q^{-2\delta_k+m-1}z}{q^{m+1}-z} & \text{if }\;j=k\\
-q^{\beta_{j+1}+\cdots+\beta_{k-1}}\frac{1-q^{2\beta_k}}{q^{m+1}-z} & \text{if } \;j<k,\\
-q^{m-(\beta_{k}+\cdots+\beta_{j})}
\frac{z(1-q^{2\beta_k})}{q^{m+1}-z} & \text{if } \; j>k,
\end{cases}
\end{align*}
where $e_j=(0,\cdots,0,1,0,\cdots,0) \in \Z^{n+1}$ contains 
$1$ at the $j$-th position from the left.
Similarly the matrix elements of $R(z)$ on $V_l \ot V_1$ are as follows:
\begin{align*}
R(z)^{\gamma,e_k}_{\alpha,e_j}=
\begin{cases}
q^{\alpha_k+1}\frac{1-q^{-2\alpha_k+l-1}z}{q^{l+1}-z} & \text{if }\;j=k\\
-q^{l-(\alpha_j+\cdots + \alpha_k)}
\frac{z(1-q^{2\alpha_k})}{q^{l+1}-z} & \text{if } \;j<k,\\
-q^{\alpha_{k+1}+\cdots+\alpha_{j-1}}
\frac{1-q^{2\alpha_k}}{q^{l+1}-z} & \text{if } \; j>k.
\end{cases}
\end{align*}
For $U_q(A^{(1)}_1)$, the $R$ matrix on $V_2 \ot V_2$ 
defines a 19-vertex model. 
Its action is given by 
\begin{align*}
&R(z)(|02\rangle \ot |02\rangle)=|02\rangle \ot |02\rangle, 
\quad R(z)(|20\rangle \ot |20\rangle)=|20\rangle \ot |20\rangle,\\
&R(z)(|02\rangle \ot |11\rangle)=\frac{q^2(1-z)}{q^4-z}|02\rangle \ot |11\rangle-\frac{(1-q^4)z}{q^4-z}|11\rangle \ot |02\rangle, \\
&R(z)(|02\rangle \ot |20\rangle)=\frac{q^2(1-z)(1-q^2z)}{(q^2-z)(q^4-z)}|02\rangle \ot |20\rangle-\frac{q(1+q^2)(1-q^4)(1-z)z}{(q^2-z)(q^4-z)}|11\rangle \ot |11\rangle \\
& \ \ \ \ \ \ \ \ \ \ \ \ \ \ \ \ \ \ \ \ \ \ \ \ \   +\frac{(1-q^2)(1-q^4)z^2}{(q^2-z)(q^4-z)}|20\rangle \ot |02\rangle, \\
&R(z)(|11\rangle \ot |20\rangle)=-\frac{(1-q^4)z}{q^4-z}|20\rangle \ot |11\rangle+\frac{q^2(1-z)}{q^4-z}|11\rangle \ot |20\rangle, \\
&R(z)(|11\rangle \ot |11\rangle)= -\frac{q(1-q^2)(1-z)}{(q^2-z)(q^4-z)}|02\rangle \ot |20\rangle+\frac{q^6z-2q^4z+q^4+q^2z^2-2q^2z+z}{(q^2-z)(q^4-z)}|11\rangle \ot |11\rangle \\
& \ \ \ \ \ \ \ \ \ \ \ \ \ \ \ \ \ \ \ \ \ \ \ \ \   -\frac{q(1-q^2)(1-z)z}{(q^2-z)(q^4-z)}|20\rangle \ot |02\rangle, \\
&R(z)(|11\rangle \ot |02\rangle)=\frac{q^2(1-z)}{q^4-z}|11\rangle \ot |02\rangle-\frac{1-q^4}{q^4-z}|02\rangle \ot |11\rangle, \\
&R(z)(|20\rangle \ot |02\rangle)=\frac{q^2(1-z)(1-q^2z)}{(q^2-z)(q^4-z)}|20\rangle \ot |02\rangle-\frac{q(1+q^2)(1-q^4)(1-z)}{(q^2-z)(q^4-z)}|11\rangle \ot |11\rangle \\
& \ \ \ \ \ \ \ \ \ \ \ \ \ \ \ \ \ \ \ \ \ \ \ \ \  +\frac{(1-q^2)(1-q^4)}{(q^2-z)(q^4-z)}|02\rangle \ot |20\rangle, \\
&R(z)(|20\rangle \ot |11\rangle)=
\frac{q^2(1-z)}{q^4-z}|20\rangle \ot |11\rangle-\frac{1-q^4}{q^4-z}|11\rangle \ot |20\rangle,
\end{align*}
where $|\alpha\rangle$ with $\alpha=(\alpha_1,\alpha_2)$ is 
denoted by $|\alpha_1\alpha_2\rangle$.

Similarly the $U_q(A^{(1)}_2)$ $R$ matrix on $V_2 \ot V_2$ defines a 
102-vertex model. We present some examples of its action.
\begin{align*}
&R(z)(|u\rangle \ot |u\rangle) = |u\rangle \ot |u\rangle,\;\;
\text{for }\; | u \rangle = |002\rangle, |020\rangle, |200\rangle,\\
&R(z)(|002\rangle \ot |011\rangle)=\frac{q^2(1-z)}{q^4-z}|002\rangle \ot |011\rangle-\frac{(1-q^4)z}{q^4-z}|011\rangle \ot |002\rangle, \\
&R(z)(|002\rangle \ot |020\rangle)=\frac{q^2(1-z)(1-q^2z)}{(q^2-z)(q^4-z)}|002\rangle \ot |020\rangle-\frac{q(1+q^2)(1-q^4)(1-z)z}{(q^2-z)(q^4-z)}|011\rangle \ot |011\rangle \\
& \ \ \ \ \ \ \ \ \ \ \ \ \ \ \ \ \ \ \ \ \ \ \ \ \  \ \   +\frac{(1-q^2)(1-q^4)z^2}{(q^2-z)(q^4-z)}|020\rangle \ot |002\rangle, \\
&R(z)(|002\rangle \ot |110\rangle)=\frac{q^2(1-z)(1-q^2z)}{(q^2-z)(q^4-z)}|002\rangle \ot |110\rangle-\frac{q^2(1-q^4)(1-z)z}{(q^2-z)(q^4-z)}|011\rangle \ot |101\rangle \\
& \ \ \ \ \ \ \ \ \ \ \ \ \ \ \ \ \ \ \ \ \ \ \ \ \  \ \   -\frac{q(1-q^4)(1-z)z}{(q^2-z)(q^4-z)}|101\rangle \ot |011\rangle+\frac{(1-q^2)(1-q^4)z^2}{(q^2-z)(q^4-z)}|110\rangle \ot |002\rangle, \\
&R(z)(|011\rangle \ot |011\rangle)=-\frac{q(1-q^2)(1-z)}{(q^2-z)(q^4-z)}|002\rangle \ot |020\rangle+\frac{q^4+z-2q^2z-2q^4z+q^6z+q^2z^2}{(q^2-z)(q^4-z)}|011\rangle \ot |011\rangle \\
& \ \ \ \ \ \ \ \ \ \ \ \ \ \ \ \ \ \ \ \ \ \ \ \ \  \ \    -\frac{q(1-q^2)(1-z)z}{(q^2-z)(q^4-z)}|020\rangle \ot |002\rangle. 
\end{align*}

The $U_q(A^{(1)}_2)$ $R$ matrix on $V_2 \ot V_3$ defines a 
204-vertex model. 
Let us pick the three matrix elements $R(z)_{\alpha, \beta}^{\gamma,\delta}$ 
having the common $(\beta, \gamma)=(201,101)$ as
\begin{align*} 
R(z)_{002,201}^{101,102} &= -\frac{q(1+q^2)(1-q^4)(q-z)z}{(q^3-z)(q^5-z)},\quad
R(z)_{011,201}^{101,111} =\frac{z(1-q^4)(1-q^2-q^4+q^3z)}{(q^3-z)(q^5-z)},\\
R(z)_{020,201}^{101,120} &= \frac{(1-q^4)^2z}{(q^3-z)(q^5-z)}.
\end{align*}
Note that $\beta \ge \gamma$ is satisfied.
For comparison we also consider the two elements with 
$(\beta, \gamma)=(201,110)$ breaking $\beta \ge \gamma$:
\begin{align*}
R(z)_{020,201}^{110,111} &= -\frac{q^2(1+q^2)(1-q^4)z(1-qz)}{(q^3-z)(q^5-z)},
\quad
R(z)_{110,201}^{110,201} =\frac{q^3(q-z)(1-qz)}{(q^3-z)(q^5-z)}.
\end{align*}
In the former three,  
$\psi^{\gamma,\delta}_{\alpha,\beta} = 1$ holds in (\ref{psi}), thus we find 
\begin{align*}
R(q^{-1})_{002,201}^{101,102} 
= R(q^{-1})_{011,201}^{101,111}
= R(q^{-1})_{020,201}^{101,120}  = \frac{q(1-q^4)}{1-q^6}  
= q \binom{3}{2}_{\!q^2}^{-1} \binom{2}{1}_{\!q^2}
\end{align*}
and 
$R(q^{-1})_{020,201}^{110,111}  = R(q^{-1})_{110,201}^{110,201} = 0$
in agreement with Theorem \ref{pr:Rs}.

\section*{Acknowledgments}
The authors thank Yoshihiro Takeyama for communication on references.
A.K. thanks Rodney Baxter, Vladimir Bazhanov and Sergey Sergeev for warm 
hospitality at Australian National University where a part of this work was done.
This work is supported by 
Grants-in-Aid for Scientific Research No.~15K04892,
No.~15K13429 and No.~23340007 from JSPS.


\begin{thebibliography}{99}

\bibitem{Bax}
R.~J.~Baxter,
\textit{Exactly solved models in statistical mechanics},
Dover (2007).

\bibitem{BB}
V.~V.~Bazhanov, R.~J.~Baxter, 
J. Stat. Phys. 69 (1992) 453-485,

\bibitem{BS}V.~V.~Bazhanov and  S.~M.~Sergeev,
Zamolodchikov's tetrahedron equation and hidden structure of quantum groups,
J. Phys. A: Math. Gen. {\bf 39}  (2006) 3295--3310.

\bibitem{BMS}
V.~V.~Bazhanov, V.~V.~Mangazeev and S.~M.~Sergeev,
Quantum geometry of 3-dimensional lattices,
J. Stat. Mech.  (2008) P07004.

\bibitem{BSc}
V.~Belitsky and G.~M.~Sch\"utz,
Self-duality for the two-component asymmetric simple exclusion process,
J. Math. Phys. {\bf 56}  (2015) 083302 (20pp).

\bibitem{BP}
A.~Borodin and L.~Petrov,
Higher spin six vertex model and symmetric rational functions,
arXiv:1601.05770.

\bibitem{CP}
I.~Corwin and L.~Petrov,
Stochastic higher spin vertex models on the line,
arXiv:1502.07374.

\bibitem{CRM}
N.~Crampe, E.~Ragoucy and M.~Vanicat,
Integrable approach to simple exclusion processes with boundaries. Review and progress,
J. Stat. Mech. Th.  Exp.  1411 (2014) P11032.

\bibitem{D} V.~G.~Drinfeld,
Quantum groups,
in Proceedings of the International Congress of Mathematicians, 
Vol. 1, 2 (Berkeley, Calif., 1986), Amer. Math. Soc., 
Providence, RI, (1987),  p798--820.

\bibitem{EMZ} 
M.~R.~Evans, S.~N.~Majumdar and R.~K.~P.~Zia,
Factorized steady states in mass transport models,
J. Phys. A: Math. Gen. {\bf 37} (2004)  L275--L280.

\bibitem{FH}
E.~Frenkel and D.~Hernandez,
Baxter's relations and spectra of quantum integrable models,
Duke Math. J. {\bf 164}  (2015)  2407--2460.

\bibitem{GS}
L-H.~Gwa and H.~Spohn,
Bethe solution for the dynamical-scaling exponent of the noisy Burgers equation,
Phys. Rev. A {\bf 46} (1992) 844--854.

\bibitem{IKT} 
R.~Inoue, A.~Kuniba and T.~Takagi,
Integrable structure of box-ball systems: 
crystal, Bethe ansatz, ultradiscretization and tropical geometry,
 J. Phys. A: Math. Theor. {\bf 45} (2012)  073001 (64pp).

\bibitem{J}
M.~Jimbo,
A $q$-difference analogue of $U({\mathfrak g})$ and the
Yang--Baxter equation,
Lett.\ Math.\ Phys.\  {\bf 10} (1985)  63--69.

\bibitem{KV}
M.~M.~Kapranov and V.~A.~Voevodsky,
2-Categories and Zamolodchikov tetrahedron equations,
Proc. Symposia in Pure Math. {\bf 56} (1994) 177--259.

\bibitem{KRS}
P.~P.~Kulish, N.~Y.~Reshetikhin and E.~K.~Sklyanin,
Yang-Baxter equation and representation theory: I,
Lett. Math. Phys. {\bf 5} (1981) 393--403.
 
\bibitem{KMO} 
A.~Kuniba, S.~Maruyama and M.~Okado,
Inhomogeneous generalization of 
multispecies totally asymmetric zero range process, 
arXiv:1602.00764.

\bibitem{KMO2} 
A.~Kuniba, S.~Maruyama and M.~Okado,
Multispecies totally asymmetric zero range process: 
II. Hat relation and tetrahedron equation, 
arXiv:1602.04574.

\bibitem{KNS}
A.~Kuniba, T.~Nakanishi and J.~Suzuki,
T-systems and Y-systems in integrable systems,
J. Phys. A: Math. Theor. {\bf 44} (2011) 103001 (146pp).

\bibitem{KO1} A.~Kuniba and M.~Okado,
Tetrahedron and 3D reflection equations
from quantized algebra of functions.
J. Phys. A: Math.Theor. {\bf 45} (2012) 465206 (27pp).

\bibitem{KO2} A.~Kuniba and M.~Okado,
Tetrahedron equation and quantum $R$ matrices for $q$-oscillator 
representations of $U_q(A^{(2)}_{2n}), U_q(C^{(1)}_{n})$ and 
$U_q(D^{(2)}_{n+1})$. 
Commun. Math. Phys. {\bf 334} (2015) 1219--1244.

\bibitem{KOS} 
A.~Kuniba, M.~Okado and S.~Sergeev,
Tetrahedron equation and generalized quantum groups,
J. Phys. A: Math. Theor. {\bf 48} (2015) 304001 (38pp).

\bibitem{KS}
 A.~Kuniba and S.~Sergeev,
Tetrahedron equation and quantum $R$ matrices for spin representations of
$B^{(1)}_n, D^{(1)}_n$ and $D^{(2)}_{n+1}$,
Commun. Math. Phys. {\bf 324}  (2013) 695--713.

\bibitem{LM}
A.~Lazarescu and K.~Mallick,
An exact formula for the statistics of the current in the TASEP with open boundaries,
J. Phys. A: Math. Theor. {\bf 44} (2011) 315001 (16pp).

\bibitem{M1}
V.~Mangazeev,
On the Yang-Baxter equation for the six-vertex model,
Nucl. Phys. B  {\bf 882}  (2014) 70--96.

\bibitem{M2}
V.~Mangazeev,
$Q$-operators in the six-vertex model,
Nucl. Phys. B  {\bf 886}  (2014) 166--184.

\bibitem{P}A.~M.~Povolotsky,
On the integrability of zero-range chipping models with 
factorized steady states,
J. Phys. A: Math. Theor. {\bf 46} (2013) 465205 (25pp).

\bibitem{SW} T.~Sasamoto and M.~Wadati,
Exact results for one-dimensional totally asymmetric diffusion models. 
J. Phys. A: Math. Gen.  {\bf 31} (1998)  6057--6071.

\bibitem{SMS}
S.~Sergeev, V.~V.~Mangazeev and Yu.~G.~Stroganov,
The vertex formulation of the Bazhanov-Baxter model,
J. Stat. Phys. {\bf 82}  (1996) 31--49.

\bibitem{T0}Y.~Takeyama,
A deformation of affine Hecke algebra and
integrable stochastic particle system,
J. Phys. A: Math. Theor. {\bf 47} (2014) 465203 (19pp).

\bibitem{T}Y.~Takeyama,
Algebraic construction of multi-species $q$-Boson system,
arXiv:1507.02033.

\bibitem{TW} C.~A.~Tracy and H.~Widom,
On the asymmetric simple exclusion process with multiple species,
J. Stat. Phys. {\bf 150} (2013) 457--470.

\bibitem{Zam80}
A.~B.~Zamolodchikov,
Tetrahedra equations and integrable systems in three-dimensional space,
Soviet Phys. JETP {\bf 79} (1980) 641--664.

\end{thebibliography}
\end{document}